\def\BOXSYMBOL{\RIfM@\bgroup\else$\bgroup\aftergroup$\fi
  \vcenter{\hrule\hbox{\vrule height.85em\kern.6em\vrule}\hrule}\egroup}
\newcommand{\BOX}{%
  \ifmmode\else\leavevmode\unskip\penalty9999\hbox{}\nobreak\hfill\fi
  \quad\hbox{\BOXSYMBOL}}
\newenvironment{proof}[1][\proofname]{\par
  \normalfont
  \topsep6\p@\@plus6\p@ \trivlist
  \item[\hskip\labelsep{\itshape #1}\@addpunct{\bfseries.}]\ignorespaces
}{%
  \BOX\endtrivlist
}
\newcommand{\proofname}{Proof.}
\newtheorem{theorem}{Theorem}[section]
\newtheorem{corollary}[theorem]{Corollary}
\newtheorem{lemma}[theorem]{Lemma}
\newtheorem{proposition}[theorem]{Proposition}
\newtheorem{remark}[theorem]{Remark}
\newtheorem{definition}[theorem]{Definition}
\newtheorem{example}[theorem]{Example}
\numberwithin{equation}{section}
\newcommand{\pmt}[1]{{\begin{pmatrix} #1  \end{pmatrix}}}
\newcommand{\demo}{\par\noindent{\it Proof. \/}\ }
\newcommand{\enD}{\hfill $\Box$\vspace{3truemm} \par}
\newcommand{\R}{\mathbb{R}}
\newcommand{\bn}{\mbox{\boldmath $n$}}
\newcommand{\bt}{\mbox{\boldmath $t$}}
\newcommand{\bs}{\mbox{\boldmath $s$}}
\newcommand{\bb}{\mbox{\boldmath $b$}}
\newcommand{\ba}{\mbox{\boldmath $a$}}
\newcommand{\bx}{\mbox{\boldmath $x$}}
\newcommand{\be}{\mbox{\boldmath $e$}}
\newcommand{\trans}[1]{{\vphantom{#1}}^t{\!#1}}
\begin{document}

\title{Singularities of helicoidal surfaces of frontals in the Euclidean space}

\author{N. Nakatsuyama, K. Saji, R. Shimada, M. Takahashi}

\date{\today}

\maketitle
\begin{abstract} 
We investigate helicoidal (screw) surfaces generated not only by regular curves but also by curves with singular points. 
For curves with singular points, it is useful to use frontals in the Euclidean plane.
The helicoidal surface of a frontal can naturally be considered as a generalised framed base surface. 
Moreover, we show that it is also a framed base surface under a mild condition. 
We give basic invariants and curvatures for helicoidal surfaces of frontals by using the curvatures of Legendre curves. 
Moreover, we also give criteria for singularities of helicoidal surfaces. 
\end{abstract}

\renewcommand{\thefootnote}{\fnsymbol{footnote}}
\footnote[0]{2020 Mathematics Subject classification: 57R45, 53A05, 58K05}
\footnote[0]{Key Words and Phrases. helicoidal surface, frontal, Legendre curve, (generalised) framed surface, singularity}

\section{Introduction}

It is well-known that the helicoidal surfaces in the Euclidean three-dimensional space are invariant under the action of the 1-parameter group of helicoidal motions and are a generalisation of rotation surfaces. 
There are a lot of investigations of helicoidal surfaces not only as minimal surfaces but also as constant mean curvature surfaces (cf. \cite{COP, HHM, YA}). 
E. Bour proved that, for a given helicoidal surface, there exists a two-parameter family of helicoidal surfaces which are isometric to it \cite{Bour}. 
See also, \cite{HHM}.
In many areas such as robot mechanics, mechanical design and computational geometry, screw 
theory has important applications, for instance, \cite{Ball, Dimentberg, Yang}. 
Moreover, there are applications to the physics of helicoidal (screw) surfaces as an optical vortices, see for example \cite{Allen,Masuda}. 
\par 
We consider the $(x,z)$-plane into $(x,y,z)$-space and give a curve in the $(x,z)$-plane, so called the profile curve, we define the helicoidal (screw) surfaces along the $z$-direction. 
If the profile curve has a singular point, then the helicoidal surface automatically has a singular point. 
Even if the profile curve is regular, the helicoidal surface may have singular points at a point $x=0$.
For curves with singular points, it is useful to use frontals in the Euclidean plane (or, Legendre curves in the unit tangent space) \cite{Fukunaga-Takahashi2013}.
\par
In \S 2, we review the theories of Legendre curves in the unit tangent bundle over the Euclidean plane, framed surfaces in the Euclidean space and generalised framed surfaces in the Euclidean space.
The helicoidal surface of a frontal can naturally be considered as a generalised framed base surface. 
Moreover, we show that it is also a framed base surface under a mild condition. 
We give basic invariants and curvatures for helicoidal surfaces of frontals by using the curvatures of Legendre curves in \S 3. 
We introduce a slice curve of the helicoidal surface which is useful to investigate singular points of helicoidal surfaces.
\par
If $x \ne 0$, singularities of the helicoidal surface are like those of the surface of revolution of a profile curve (cf. \cite{Martins-Saji-Santos-Teramoto, Takahashi-Teramoto}). 
For singular points at a point $x=0$, singularities of a helicoidal surface are of the edge type.
This phenomenon is different from that of surfaces of revolution. 
We stick our consideration into the case $x=0$ not only for regular curves but also for curves with singular points. 
In \S 4, we give criteria for singularities of helicoidal surfaces (Theorem \ref{thm:criteria}). 
As a consequence, we show that some types of singular points of helicoidal surfaces do not appear.
In \S 5, we give examples of helicoidal surfaces of frontals with singular points. 
\par
All maps and manifolds considered here are differentiable of class $C^{\infty}$ unless stated otherwise. 

\bigskip
\noindent
{\bf Acknowledgement}. 
The second author was partially supported by JSPS KAKENHI Grant Numbers JP 22K03312 and 	22KK0034.
The fourth author was partially supported by JSPS KAKENHI Grant Number JP 24K06728.

\section{Preliminaries}
We quickly review the theories of Legendre curves in the unit tangent bundle over the Euclidean plane (cf. \cite{Fukunaga-Takahashi2013}), framed surfaces in the Euclidean space (cf. \cite{Fukunaga-Takahashi2019}) and generalised framed surfaces in the Euclidean space (cf. \cite{Takahashi-Yu}).

\subsection{Legendre curves}

Let $\gamma:I\to\R^2$ and $\nu:I\to S^1$ be smooth mappings, where $I$ is an interval of $\R$ and $S^1$ is the unit circle.
We say that $(\gamma,\nu):I \to \R^2 \times S^1$ is {\it a Legendre curve} if $(\gamma,\nu)^*\theta=0$ for all $t \in I$, where $\theta$ is a canonical contact form on the unit tangent bundle $T_1 \R^2=\R^2 \times S^1$ over $\R^2$ (cf. \cite{Arnold1, Arnold2}). 
This condition is equivalent to $\dot{\gamma}(t) \cdot \nu(t)=0$ for all $t \in I$, 
where $\dot{\gamma}(t)=(d\gamma/dt)(t)$ and $\ba\cdot\bb=a_1b_1+a_2b_2$ for any $\ba=(a_1,a_2), \bb=(b_1,b_2)\in\R^2$. 
{A point $t_0\in I$ is called a {\it singular point} of $\gamma$ if $\dot{\gamma}(t_0)=0$.} 
When a Legendre curve $(\gamma,\nu):I\to\R^2\times S^1$ gives an immersion, it is called a {\it Legendre immersion}.
We say that $\gamma:I \to \R^2$ is a {\it frontal} (respectively, a {\it front}) if there exists $\nu:I \to S^1$ such that $(\gamma,\nu)$ is a Legendre curve (respectively, a Legendre immersion). 
Examples of Legendre curves see \cite{Ishikawa,Ishikawa-book}.
We have the Frenet type formula of a frontal $\gamma$ as follows.
We put on $\mu (t)=J(\nu (t))$, where $J$ is the anticlockwise rotation by angle $\pi/2$ in $\R^2$.
Then $\{\nu(t), \mu(t) \}$ is a moving frame of the frontal $\gamma(t)$ in $\R^2$ and we have the Frenet type formula,
\begin{equation}\label{Frenet.frontal}
\left(
\begin{array}{c}
\dot{\nu}(t)\\
\dot{\mu}(t)
\end{array}
\right)
=
\left(
\begin{array}{cc}
0 & \ell(t)\\
-\ell(t) & 0
\end{array}
\right)
\left(
\begin{array}{c}
\nu(t)\\
\mu(t)
\end{array}
\right), \ 
\dot\gamma(t) = \beta(t) \mu(t),
\end{equation}
where $\ell(t)=\dot\nu(t) \cdot \mu(t)$ and $\beta(t)=\dot{\gamma}(t) \cdot \mu(t)$.
We call the pair $(\ell,\beta)$ {\it the curvature of the Legendre curve}. 
By \eqref{Frenet.frontal}, we see that $(\gamma,\nu)$ is a Legendre immersion if and only if $(\ell,\beta)\neq(0,0)$.

\begin{theorem}[Existence Theorem for Legendre curves \cite{Fukunaga-Takahashi2013}] \label{existence.Legendre}
Let $(\ell,\beta):I \to \R^2$ be a smooth mapping. 
There exists a Legendre curve $(\gamma,\nu):I \to \R^2 \times S^1$ whose curvature of the Legendre curve is $(\ell, \beta)$.
\end{theorem}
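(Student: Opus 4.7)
The plan is to recover $(\gamma,\nu)$ from the prescribed curvature data $(\ell,\beta)$ by two quadratures, in direct analogy with the classical reconstruction of a planar curve from its signed curvature. The only subtlety is that $\nu$ is $S^1$-valued, which I would handle by first lifting to a real-valued angle function.

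First I would fix $t_0 \in I$ and an initial angle $\theta_0 \in \R$ and set
\[
\theta(t) = \theta_0 + \int_{t_0}^{t} \ell(s)\,ds, \qquad \nu(t) = (\cos\theta(t), \sin\theta(t)), \qquad \mu(t) = J\nu(t).
\]
Differentiating gives $\dot\nu(t) = \ell(t)\,\mu(t)$ and $\dot\mu(t) = -\ell(t)\,\nu(t)$, so $\{\nu,\mu\}$ is a smooth orthonormal frame along $I$ realising the first half of the Frenet-type formula (\ref{Frenet.frontal}), with $\dot\nu \cdot \mu = \ell$ by construction.

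Next I would choose $\gamma_0 \in \R^2$ and define
\[
\gamma(t) = \gamma_0 + \int_{t_0}^{t} \beta(s)\,\mu(s)\,ds.
\]
Then $\gamma$ is smooth, $\dot\gamma = \beta\,\mu$, and since $\mu \perp \nu$ the Legendre condition $\dot\gamma \cdot \nu = 0$ holds pointwise on $I$. The identity $\dot\gamma \cdot \mu = \beta$ is then immediate, so the curvature of the constructed Legendre curve $(\gamma,\nu)$ is exactly $(\ell,\beta)$.

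There is essentially no obstacle to overcome here: both steps are integrations of given smooth functions on an interval, and smoothness of $\gamma$ and $\nu$ is automatic. The free parameters $\theta_0$ and $\gamma_0$ reflect the expected ambiguity up to a rigid motion of $\R^2$, and any choice produces a Legendre curve with the prescribed curvature.
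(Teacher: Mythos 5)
Your construction is correct and is essentially identical to the paper's: the paper simply exhibits $\nu(t)=\bigl(\cos\int\ell\,dt,\ \sin\int\ell\,dt\bigr)$ and $\gamma(t)=\bigl(-\int\beta\sin(\int\ell\,dt)\,dt,\ \int\beta\cos(\int\ell\,dt)\,dt\bigr)$, which is exactly your $\nu=(\cos\theta,\sin\theta)$ and $\gamma=\gamma_0+\int\beta\,\mu$ with $\mu=J\nu=(-\sin\theta,\cos\theta)$. No discrepancy.
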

Actually, we have the following.
\begin{align*}
\gamma(t) &=\left(-\int \beta(t) \sin \left( \int \ell(t)\ dt\right) dt,\ \int \beta(t) \cos \left(\int \ell(t)\ dt\right) dt\right),\\
\nu(t) &= \left(\cos \left(\int \ell(t) \ dt\right), \ \sin \left(\int \ell(t)\ dt\right) \right).
\end{align*}
\begin{definition}\label{congruent}{\rm
Let $(\gamma,\nu)$ and $(\widetilde{\gamma},\widetilde{\nu}):I \to \R^2 \times S^1$ be Legendre curves.
We say that $(\gamma,\nu)$ and $(\widetilde{\gamma},\widetilde{\nu})$ are {\it congruent as Legendre curves} if there exist a constant rotation $A \in SO(2)$ and a translation $\ba$ on $\R^2$ such that $\widetilde{\gamma}(t)=A(\gamma(t))+\ba$ and $\widetilde{\nu}(t)=A (\nu(t))$ for all $t \in I$. 
}
\end{definition}
\begin{theorem}[Uniqueness Theorem for Legendre curves \cite{Fukunaga-Takahashi2013}] \label{uniqueness.Legendre}
Let $(\gamma,\nu)$ and $(\widetilde{\gamma},\widetilde{\nu}):I \to \R^2 \times S^1$ be Legendre curves with the curvatures of Legendre curves $(\ell,\beta)$ and $(\widetilde{\ell},\widetilde{\beta})$, respectively.
Then $(\gamma,\nu)$ and $(\widetilde{\gamma},\widetilde{\nu})$ are congruent as Legendre curves if and only if $(\ell,\beta)$ and $(\widetilde{\ell},\widetilde{\beta})$ coincide.
\end{theorem}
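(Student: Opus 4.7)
The plan is to handle the two implications separately and exploit the Frenet-type formula \eqref{Frenet.frontal} as a linear ODE whose coefficient matrix depends only on $\ell$.

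For the easy ``only if'' direction, I would suppose $\widetilde{\gamma}(t)=A(\gamma(t))+\ba$ and $\widetilde{\nu}(t)=A(\nu(t))$ with $A\in SO(2)$. Since $A$ commutes with the rotation $J$ by $\pi/2$, we get $\widetilde{\mu}(t)=J\widetilde{\nu}(t)=A J\nu(t)=A\mu(t)$. Because $A$ preserves the Euclidean inner product and differentiation commutes with constant linear maps, a direct computation gives
\[
\widetilde{\ell}(t)=\dot{\widetilde{\nu}}(t)\cdot\widetilde{\mu}(t)=A\dot\nu(t)\cdot A\mu(t)=\dot\nu(t)\cdot\mu(t)=\ell(t),
\]
and similarly $\widetilde\beta(t)=\dot{\widetilde\gamma}(t)\cdot\widetilde\mu(t)=\beta(t)$.

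For the ``if'' direction, which is the substantive part, I would fix $t_0\in I$ and choose $A\in SO(2)$ as the unique rotation sending the orthonormal frame $\{\nu(t_0),\mu(t_0)\}$ to $\{\widetilde\nu(t_0),\widetilde\mu(t_0)\}$; this is possible because both are positively oriented orthonormal bases of $\R^2$ (since $\mu=J\nu$ and $\widetilde\mu=J\widetilde\nu$). Setting $\bn(t):=A\nu(t)$ and $\bmu(t):=A\mu(t)$, the hypothesis $\ell=\widetilde\ell$ combined with \eqref{Frenet.frontal} shows that both pairs $(\bn,\bmu)$ and $(\widetilde\nu,\widetilde\mu)$ satisfy the same first-order linear ODE system with coefficient matrix determined by $\ell$, and they agree at $t_0$ by construction. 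Uniqueness of solutions to linear ODEs then gives $A\nu(t)=\widetilde\nu(t)$ and $A\mu(t)=\widetilde\mu(t)$ for every $t\in I$.

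To recover $\widetilde\gamma$, I would use the second part of \eqref{Frenet.frontal}: since $\beta=\widetilde\beta$ and $A\mu=\widetilde\mu$,
\[
\frac{d}{dt}\bigl(A\gamma(t)\bigr)=A\dot\gamma(t)=\beta(t)A\mu(t)=\widetilde\beta(t)\widetilde\mu(t)=\dot{\widetilde\gamma}(t),
\]
so $\widetilde\gamma(t)-A\gamma(t)$ is a constant $\ba\in\R^2$, and choosing this constant as the translation completes the congruence. The only delicate point is the initial-frame step: one must verify that matching $\nu(t_0)$ to $\widetilde\nu(t_0)$ automatically matches $\mu(t_0)$ to $\widetilde\mu(t_0)$, which follows at once from $\mu=J\nu$ and the commutation of $J$ with $SO(2)$; the rest is an application of the standard uniqueness theorem for linear ODEs.
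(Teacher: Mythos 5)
Your argument is correct and is essentially the standard proof of this theorem: the paper itself only cites \cite{Fukunaga-Takahashi2013} without reproducing the proof, and the argument there proceeds exactly as you do, via invariance of $(\ell,\beta)$ under congruence for the ``only if'' direction and uniqueness of solutions to the linear Frenet-type ODE system \eqref{Frenet.frontal} with matched initial frame for the ``if'' direction. No gaps; the observation that $A$ commutes with $J$ so that matching $\nu(t_0)$ automatically matches $\mu(t_0)$ is precisely the point that makes the initial-value argument work.
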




\subsection{Framed surfaces}

Let $\R^3$ be the $3$-dimensional Euclidean space equipped with the inner product $\ba \cdot \bb = a_1 b_1 + a_2 b_2 + a_3 b_3$, 
where $\ba = (a_1, a_2, a_3)$ and $\bb = (b_1, b_2, b_3) \in \R^3$. 
The norm of $\ba$ is given by $\vert \ba \vert = \sqrt{\ba \cdot \ba}$. 
We also define the vector product
$$
\ba \times \bb=\det \left(
\begin{array}{ccc}
\be_1 & \be_2 & \be_3 \\
a_1 & a_2 & a_3 \\
b_1 & b_2 & b_3 
\end{array}
\right),
$$
where $\{ \be_1, \be_2, \be_3 \}$ is the canonical basis of $\R^3$. 
Let $U$ be a simply connected domain of $\R^2$ and $S^2$ be the unit sphere in $\R^3$, that is, $S^2=\{\ba \in \R^3| |\ba|=1\}$.
We denote a $3$-dimensional smooth manifold $\{(\ba,\bb) \in S^2 \times S^2| \ba \cdot \bb=0\}$ by $\Delta$.
\par
We say that $(\bx,\bn,\bs):U \to \R^3 \times \Delta$ is a {\it framed surface} if $\bx_u (u,v) \cdot \bn (u,v)=0$ and $\bx_v(u,v) \cdot \bn(u,v)=0$ for all $(u,v) \in U$, where $\bx_u(u,v)=(\partial \bx/\partial u)(u,v)$ and $\bx_v(u,v)=(\partial \bx/\partial v)(u,v)$. 
We say that $\bx:U \to \R^3$ is a {\it framed base surface} if there exists $(\bn,\bs):U \to \Delta$ such that $(\bx,\bn,\bs)$ is a framed surface.
\par
Similarly to the case of Legendre curves, 
the pair $(\bx,\bn):U\to\R^3\times S^2$ is said to be a {\it Legendre surface} 
if $\bx_u (u,v) \cdot \bn (u,v)=0$ and $\bx_v(u,v) \cdot \bn(u,v)=0$ for all $(u,v) \in U$. 
Moreover, when a Legendre surface $(\bx,\bn):U\to\R^3\times S^2$ gives an immersion, this is called a {\it Legendre immersion}. 
We say that $\bx:U\to\R^3$ be a {\it frontal} (respectively, a {\it front}) if there exists a map $\bn:U\to\ S^2$ such that 
the pair $(\bx,\bn):U\to\R^3\times S^2$ is a Legendre surface (respectively, a Legendre immersion).
By definition, the framed base surface is a frontal. 
At least locally, the frontal is a framed base surface. 
{For a framed surface $(\bx,\bn,\bs)$, we say that a point $p\in U$ is a {\it singular point of $\bx$} 
if $\bx$ is not an immersion at $p$.}

We denote $\bt(u,v)=\bn(u,v) \times \bs(u,v)$. 
Then $\{\bn(u,v),\bs(u,v),\bt(u,v)\}$ is a moving frame along $\bx(u,v)$.
Thus, we have the following systems of differential equations:
\begin{equation}\label{tangent}
\left(\begin{array}{c}
\bx_u \\
\bx_v
\end{array}\right)
=
\left(\begin{array}{cc} 
a_1 & b_1 \\
a_2 & b_2 
\end{array}\right)
\left(\begin{array}{c}
\bs \\
\bt
\end{array}\right),
\end{equation}
\begin{equation}\label{frame}
\left(\begin{array}{c} 
\bn_u \\
\bs_u \\
\bt_u
\end{array}\right)
=
\left(\begin{array}{ccc} 
0 & e_1 & f_1 \\
-e_1 & 0 & g_1 \\
-f_1 & -g_1 & 0
\end{array}\right)
\left(\begin{array}{c} 
\bn \\
\bs \\
\bt
\end{array}\right)
, \ 
\left(\begin{array}{c} 
\bn_v \\
\bs_v \\
\bt_v
\end{array}\right)
=
\left(\begin{array}{ccc} 
0 & e_2 & f_2 \\
-e_2 & 0 & g_2 \\
-f_2 & -g_2 & 0
\end{array}\right)
\left(\begin{array}{c} 
\bn \\
\bs \\
\bt
\end{array}\right),
\end{equation}
where $a_i,b_i,e_i,f_i,g_i:U \to \R, i=1,2$ are smooth functions and we call the functions {\it basic invariants} of the framed surface. 
We denote the matrices $(\ref{tangent})$ and $(\ref{frame})$ by $\mathcal{G}, \mathcal{F}_1, \mathcal{F}_2$, respectively. 
We also call the matrices $(\mathcal{G}, \mathcal{F}_1, \mathcal{F}_2)$  {\it basic invariants} of the framed surface $(\bx,\bn,\bs)$.
Since the integrability condition $\bx_{uv}=\bx_{vu}$ and $\mathcal{F}_{2,u}-\mathcal{F}_{1,v}
=\mathcal{F}_1\mathcal{F}_2-\mathcal{F}_2\mathcal{F}_1$, 
the basic invariants should satisfy the following conditions:
\begin{equation}\label{integrability.condition}
\begin{cases}
a_{1,v}-b_1g_2 = a_{2,u}-b_2g_1, \\
b_{1,v}-a_2g_1 = b_{2,u}-a_1g_2, \\
a_1 e_2 + b_1 f_2 = a_2 e_1 + b_2 f_1,
\end{cases}
\begin{cases}
e_{1,v}-f_1g_2 = e_{2,u}-f_2g_1, \\
f_{1,v}-e_2g_1 = f_{2,u}-e_1g_2, \\
g_{1,v}-e_1f_2 = g_{2,u}-e_2f_1. 
\end{cases}
\end{equation}
We give fundamental theorems for framed surfaces, that is, 
the existence and uniqueness theorem of framed surfaces for basic invariants.

\begin{theorem}[Existence Theorem for framed surfaces \cite{Fukunaga-Takahashi2019}]\label{existence.framed.surface}
Let $U$ be a simply connected domain in $\R^2$ and let $a_i,b_i,e_i,f_i,g_i:U \to \R, i=1,2$ be smooth functions with the integrability conditions $(\ref{integrability.condition})$.
Then there exists a framed surface $(\bx,\bn,\bs):U \to \R^3 \times \Delta$ whose associated basic invariants is $a_i,b_i,e_i,f_i,g_i, i=1,2$.
\end{theorem}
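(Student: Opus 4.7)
The plan is a standard Frobenius/Poincaré argument in two stages: first integrate the frame equations from (2.3) to obtain $(\bn, \bs, \bt)$, and then integrate the tangential equations to obtain $\bx$. Simple-connectedness of $U$ is used at both stages.

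\textbf{Step 1 (the frame).} I would fix $p_0 \in U$ and choose any positively oriented orthonormal triple $\{\bn_0, \bs_0, \bt_0\}$ in $\R^3$ with $\bt_0 = \bn_0 \times \bs_0$. Consider the overdetermined linear system consisting of the two matrix equations in (2.3) for $\mathcal{F}_1, \mathcal{F}_2$, viewed as PDEs for the unknowns $(\bn, \bs, \bt): U \to \R^3 \times \R^3 \times \R^3$, with that initial condition at $p_0$. Because the matrices $\mathcal{F}_1$ and $\mathcal{F}_2$ are skew-symmetric, the Gram matrix of the solution is preserved along any path, so the resulting triple automatically stays orthonormal and positively oriented; in particular $(\bn,\bs) \in \Delta$. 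The compatibility condition $\mathcal{F}_{2,u} - \mathcal{F}_{1,v} = \mathcal{F}_1 \mathcal{F}_2 - \mathcal{F}_2 \mathcal{F}_1$, when expanded entry by entry, is precisely the second set of three equations in \eqref{integrability.condition}. Frobenius' theorem on the simply connected $U$ therefore produces a (unique) smooth solution.

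\textbf{Step 2 (the base surface $\bx$).} With $\bs, \bt$ in hand, define the $\R^3$-valued $1$-form
$$
\omega := (a_1 \bs + b_1 \bt)\, du + (a_2 \bs + b_2 \bt)\, dv.
$$
Compute $d\omega$ by differentiating the coefficients and substituting $\bs_u, \bs_v, \bt_u, \bt_v$ from the frame equations of Step 1. Decomposing the resulting vector along $\{\bn,\bs,\bt\}$, the $\bn$-coefficient yields the third equation of the first group in \eqref{integrability.condition}, the $\bs$-coefficient yields the first, and the $\bt$-coefficient yields the second. Hence $d\omega = 0$, and the Poincaré lemma on the simply connected $U$ produces $\bx: U \to \R^3$ with $\bx_u = a_1 \bs + b_1 \bt$ and $\bx_v = a_2 \bs + b_2 \bt$. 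Since $\bs \cdot \bn = 0$ and $\bt \cdot \bn = 0$, we immediately get $\bx_u \cdot \bn = \bx_v \cdot \bn = 0$, so $(\bx, \bn, \bs): U \to \R^3 \times \Delta$ is a framed surface, and by construction its basic invariants are exactly the prescribed $(\mathcal{G}, \mathcal{F}_1, \mathcal{F}_2)$.

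The hard part is not the Frobenius or Poincaré application per se (both are essentially automatic on a simply connected domain) but the algebraic bookkeeping that links the geometric integrability conditions to the six scalar equations in \eqref{integrability.condition}: one must verify, component by component, that the Maurer-Cartan-type commutator condition on $(\mathcal{F}_1,\mathcal{F}_2)$ reproduces the three equations involving $e_i, f_i, g_i$, and that closedness of $\omega$ (after using the frame equations) reproduces the three equations involving $a_i, b_i$ together with the $e_i, f_i, g_i$. Once these correspondences are recorded, the existence of $(\bx,\bn,\bs)$ with the prescribed invariants is immediate.
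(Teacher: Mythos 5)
Your proof is correct and follows the standard Frobenius-plus-Poincar\'e argument that is used in the cited reference \cite{Fukunaga-Takahashi2019}; the paper itself quotes this theorem without proof. The component-by-component identifications you describe (the commutator condition giving the three equations in $e_i,f_i,g_i$, and closedness of $\omega$ giving the three equations in $a_i,b_i$) check out, so there is nothing to add.
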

\begin{definition}\label{congruent.framed.surface}{\rm
Let $(\bx,\bn,\bs), (\widetilde{\bx},\widetilde{\bn},\widetilde{\bs}):U \to \R^3 \times \Delta$ be framed surfaces.
We say that $(\bx,\bn,\bs)$ and $(\widetilde{\bx},\widetilde{\bn},\widetilde{\bs})$ are {\it congruent as framed surfaces} if there exist a constant rotation $A \in SO(3)$ and a translation $\ba \in \R^3$ such that 
$$
\widetilde{\bx}(u,v)=A(\bx(u,v))+\ba, \ \widetilde{\bn}(u,v)=A(\bn(u,v)), \  \widetilde{\bs}(u,v)=A(\bs(u,v))
$$ 
for all $(u,v) \in U$.}
\end{definition}
\begin{theorem}[Uniqueness Theorem for framed surfaces \cite{Fukunaga-Takahashi2019}]\label{uniqueness.framed.surface}
Let $(\bx,\bn,\bs)$, $(\widetilde{\bx},\widetilde{\bn},\widetilde{\bs}):U \to \R^3 \times \Delta$ be framed surfaces 
with basic invariants $(\mathcal{G},\mathcal{F}_1,\mathcal{F}_2)$ and $(\widetilde{\mathcal{G}},\widetilde{\mathcal{F}}_1,\widetilde{\mathcal{F}}_2)$, respectively.
Then $(\bx,\bn,\bs)$ and $(\widetilde{\bx},\widetilde{\bn},\widetilde{\bs})$ are congruent as framed surfaces if and only if $(\mathcal{G},\mathcal{F}_1,\mathcal{F}_2)$ and $(\widetilde{\mathcal{G}},\widetilde{\mathcal{F}}_1,\widetilde{\mathcal{F}}_2)$  coincide.
\end{theorem}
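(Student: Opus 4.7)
The plan is to split into the two implications. The ``only if'' direction is essentially routine: if $\widetilde{\bx} = A\bx + \ba$, $\widetilde{\bn} = A\bn$, $\widetilde{\bs} = A\bs$ with $A \in SO(3)$ constant, then since $SO(3)$ preserves the cross product one also has $\widetilde{\bt} = A\bt$. Differentiating these relations, using that $A$ commutes with $\partial_u, \partial_v$ and that $\{\bn, \bs, \bt\}$ is a pointwise orthonormal frame, one reads off the coefficients in \eqref{tangent} and \eqref{frame} as inner products and finds they are unchanged. Hence $(\mathcal{G}, \mathcal{F}_1, \mathcal{F}_2) = (\widetilde{\mathcal{G}}, \widetilde{\mathcal{F}}_1, \widetilde{\mathcal{F}}_2)$.

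For the ``if'' direction, fix a base point $p_0 \in U$. Both frames at $p_0$ are positively oriented orthonormal bases of $\R^3$, so there is a unique $A \in SO(3)$ sending $\{\widetilde{\bn}, \widetilde{\bs}, \widetilde{\bt}\}(p_0)$ to $\{\bn, \bs, \bt\}(p_0)$; choose $\ba \in \R^3$ so that the associated rigid motion also sends $\widetilde{\bx}(p_0)$ to $\bx(p_0)$. Replacing $(\widetilde{\bx}, \widetilde{\bn}, \widetilde{\bs})$ by its image under this motion (which preserves the basic invariants by the first direction), we may assume the two framed surfaces coincide at $p_0$. Assembling the frames into $3 \times 3$ matrix-valued maps $\Phi, \widetilde{\Phi}$ on $U$ with rows $\bn, \bs, \bt$, the identities \eqref{frame} become the single overdetermined linear PDE system $\Phi_u = \mathcal{F}_1 \Phi$, $\Phi_v = \mathcal{F}_2 \Phi$, which $\widetilde{\Phi}$ also satisfies with the same initial value. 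Once $\Phi \equiv \widetilde{\Phi}$ is established, \eqref{tangent} combined with $\mathcal{G} = \widetilde{\mathcal{G}}$ gives $\bx_u = \widetilde{\bx}_u$ and $\bx_v = \widetilde{\bx}_v$, and integration from $p_0$ finishes the proof.

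The main obstacle is the global integration of the overdetermined system for $\Phi$. Matching $\Phi_{uv} = \Phi_{vu}$ gives the compatibility condition $\mathcal{F}_{2,u} - \mathcal{F}_{1,v} = \mathcal{F}_1 \mathcal{F}_2 - \mathcal{F}_2 \mathcal{F}_1$, which is precisely the second half of \eqref{integrability.condition} and is therefore available by hypothesis. My strategy is to integrate the ODE $(\Phi \circ \gamma)'(t) = \bigl(\dot{\gamma}_1(t)\, \mathcal{F}_1 + \dot{\gamma}_2(t)\, \mathcal{F}_2\bigr)(\Phi \circ \gamma)(t)$ along any smooth path $\gamma$ from $p_0$ to a target point $p$, and then derive path-independence from the compatibility condition via a standard Frobenius or Poincar\'e-lemma argument, deforming one path to another within the simply connected domain $U$ and checking that the endpoint value is unchanged. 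This produces a well-defined global $\Phi$, and linear ODE uniqueness then forces $\Phi = \widetilde{\Phi}$ everywhere on $U$. All the remaining verifications reduce to routine linear algebra.
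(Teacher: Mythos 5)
The paper does not prove this theorem at all: it is quoted verbatim from Fukunaga--Takahashi \cite{Fukunaga-Takahashi2019} as background, so there is no internal proof to compare against. Your argument is the standard one for such uniqueness statements and is essentially correct: the ``only if'' direction is the routine computation you describe, and the ``if'' direction correctly reduces, after normalizing by a rigid motion at a base point, to uniqueness for the linear system $\Phi_u=\mathcal{F}_1\Phi$, $\Phi_v=\mathcal{F}_2\Phi$ followed by integrating $\mathcal{G}=\widetilde{\mathcal{G}}$ to recover $\bx=\widetilde{\bx}$. One remark: the ``main obstacle'' you identify is not actually an obstacle for this theorem. The Frobenius/path-independence machinery and the compatibility condition $\mathcal{F}_{2,u}-\mathcal{F}_{1,v}=\mathcal{F}_1\mathcal{F}_2-\mathcal{F}_2\mathcal{F}_1$ are what one needs to \emph{construct} a solution in the Existence Theorem; here both $\Phi$ and $\widetilde{\Phi}$ are already given as the frames of the two framed surfaces. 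To conclude $\Phi=\widetilde{\Phi}$ it suffices to restrict to an arbitrary path from $p_0$ to $p$ (connectedness of $U$ is enough; simple connectedness is not needed) and invoke ODE uniqueness, or, more cleanly, to observe that $\widetilde{\Phi}^{-1}\Phi$ has vanishing partial derivatives because $\widetilde{\Phi}\in SO(3)$ and both factors satisfy the same system, hence is constantly the identity. So your proof is correct but carries an unnecessary detour borrowed from the existence argument.
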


Let $(\bx,\bn,\bs):U \to \R^3 \times \Delta$ be a framed surface with basic invariants $(\mathcal{G},\mathcal{F}_1,\mathcal{F}_2).$

\begin{definition}\label{curvature.framed.surface}{\rm 
We define a smooth mapping $C^F=(J^F,K^F,H^F):U \to \R^3$ by 
\begin{align*}
J^F =
\det \left(\begin{array}{cc} 
a_1 & b_1 \\ 
a_2 & b_2
\end{array}\right), 
K^F =
\det \left(\begin{array}{cc} 
e_1 & f_1 \\ 
e_2 & f_2
\end{array}\right), 
H^F = -\frac{1}{2}\left\{ 
\det \left(\begin{array}{cc} 
a_1 & f_1 \\ 
a_2 & f_2
\end{array}\right)
-
\det \left(\begin{array}{cc} 
b_1 & e_1 \\ 
b_2 & e_2
\end{array}\right)
\right\}.
\end{align*}
We call $C^F = (J^F,K^F,H^F)$ a {\it curvature of the framed surface}.
}
\end{definition}
\begin{remark}{\rm 
If the surface $\bx$ is regular, then we have $K=K^F/J^F$ and $H=H^F/J^F$, where 
$K$ is the Gauss curvature and $H$ is the mean curvature of the regular surface (cf. \cite{Fukunaga-Takahashi2019}). 
For relations between behaviour of the Gauss curvature, the mean curvature of fronts at non-degenerate singular points and geometric invariants of fronts see \cite{Martins-Saji-Umehara-Yamada}.} 
\end{remark}
We say that $(\bx,\bn,\bs):U \to \R^3 \times \Delta$ is a {\it framed immersion} if $(\bx,\bn,\bs)$ is an immersion.
\begin{proposition}[\cite{Fukunaga-Takahashi2019}]
\label{immersive.condition}
Let $(\bx,\bn,\bs):U \to \R^3 \times \Delta$ be a framed surface and $p \in U$.
\par
$(1)$ $\bx$ is an immersion (a regular surface) around $p$ if and only if $J^F (p) \not=0$.
\par
$(2)$ $(\bx,\bn)$ is a Legendre immersion around $p$ if and only if $C^F (p) \not=0$.
\end{proposition}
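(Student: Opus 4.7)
The plan for (1) is direct. Using \eqref{tangent} together with the fact that $\{\bn,\bs,\bt\}$ is a positively oriented orthonormal frame (so $\bs\times\bt=\bn$), I compute
\[
\bx_u\times\bx_v=(a_1\bs+b_1\bt)\times(a_2\bs+b_2\bt)=(a_1b_2-a_2b_1)\,\bn=J^F\,\bn.
\]
Since $|\bn|=1$, the vanishing of $\bx_u\times\bx_v$ at $p$ is equivalent to $J^F(p)=0$, which settles (1).

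For (2), I would identify the immersivity of $(\bx,\bn)$ with a rank condition on a $2\times 4$ matrix. Writing $(\bx_u,\bn_u)$ and $(\bx_v,\bn_v)$ in the frame $\{\bs,\bt\}$ via \eqref{tangent} and \eqref{frame}, the differential $d(\bx,\bn)$ has, at each point, the representative matrix
\[
M=\begin{pmatrix} a_1 & b_1 & e_1 & f_1 \\ a_2 & b_2 & e_2 & f_2 \end{pmatrix}.
\]
Then $(\bx,\bn)$ is a Legendre immersion at $p$ if and only if $M(p)$ has rank $2$, i.e., at least one of its six $2\times 2$ minors $M_{ij}$ is nonzero. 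Observe that $M_{12}=J^F$, $M_{34}=K^F$, and $H^F=-\tfrac12(M_{14}-M_{23})$; hence if $C^F(p)\ne 0$, then at least one of these minors is nonzero and $M(p)$ automatically has rank $2$. This disposes of the easy implication.

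The hard direction is the converse: showing that $C^F(p)=0$ forces all six minors to vanish. My key tool will be the Plücker identity
\[
M_{12}M_{34}-M_{13}M_{24}+M_{14}M_{23}=0,
\]
valid for the minors of any $2\times 4$ matrix, combined with the third integrability condition in \eqref{integrability.condition}, which rearranges as $M_{13}+M_{24}=0$. Substituting $M_{12}=M_{34}=0$, $M_{14}=M_{23}$, and $M_{13}=-M_{24}$ into the Plücker relation yields $M_{24}^2+M_{14}^2=0$, which forces $M_{13}=M_{14}=M_{23}=M_{24}=0$. Therefore $M(p)$ has rank at most $1$, and $(\bx,\bn)$ is not an immersion at $p$. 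The main obstacle is recognizing that the single Plücker relation plus exactly one integrability constraint is enough to collapse the remaining four minors; once this algebraic observation is made, the rest of the argument is routine.
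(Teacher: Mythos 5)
Your proof is correct. Note, however, that this paper does not prove the proposition at all: it is imported verbatim from the cited reference \cite{Fukunaga-Takahashi2019}, so there is no in-paper argument to compare yours against. On its own merits your argument is sound. For (1), the identity $\bx_u\times\bx_v=(a_1b_2-a_2b_1)\,\bs\times\bt=J^F\bn$ with $|\bn|=1$ is exactly the right computation. For (2), the reduction to the $2\times 4$ matrix $M$ is legitimate because the framed-surface condition and $|\bn|=1$ force all four vectors $\bx_u,\bx_v,\bn_u,\bn_v$ into $\operatorname{span}\{\bs,\bt\}$, and the ``hard'' direction is handled cleanly: the Gra\ss mann--Pl\"ucker relation $M_{12}M_{34}-M_{13}M_{24}+M_{14}M_{23}=0$, together with the third integrability condition $a_1e_2+b_1f_2=a_2e_1+b_2f_1$ (which reads $M_{13}+M_{24}=0$ and holds automatically from $\bx_{uv}=\bx_{vu}$), collapses the vanishing of the three components of $C^F$ to $M_{14}^2+M_{24}^2=0$ and hence to the vanishing of all six minors. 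The only point worth making explicit is that immersivity at the single point $p$ is an open condition, so the pointwise rank statement does yield the ``around $p$'' formulation in the proposition.
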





\subsection{Generalised framed surfaces}

Let $(\bx,\nu_1,\nu_2):U \to \R^3 \times \Delta$ be a smooth mapping.
We denote $\nu=\bx_u \times \bx_v$.

\begin{definition}\label{generalised.framed.surface}{\rm
We say that $(\bx,\nu_1,\nu_2):U \to \R^3 \times \Delta$ is a {\it generalised framed surface} if there exist smooth functions $\alpha, \beta:U \to \R$ such that $\nu(u,v)=\alpha(u,v) \nu_1(u,v)+\beta(u,v) \nu_2(u,v)$ for all $(u,v) \in U$.
We say that $\bx:U \to \R^3$ is a {\it generalised framed base surface} if there exists $(\nu_1,\nu_2):U \to \Delta$ such that $(\bx,\nu_1,\nu_2)$ is a generalised framed surface.
}
\end{definition}

We denote $\nu_3(u,v)=\nu_1(u,v) \times \nu_2(u,v)$.
Then $\{\nu_1(u,v),\nu_2(u,v),\nu_3(u,v)\}$ is a moving frame along $\bx(u,v)$
and we have the following systems of differential equations:
$$
\begin{pmatrix}
\bx_u \\
\bx_v
\end{pmatrix}
=
\begin{pmatrix}
a_1 & b_1 & c_1\\
a_2 & b_2 & c_2
\end{pmatrix}
\begin{pmatrix}
\nu_1 \\
\nu_2 \\
\nu_3
\end{pmatrix},
\begin{pmatrix}
\nu_{1u} \\
\nu_{2u} \\
\nu_{3u}
\end{pmatrix}
=
\begin{pmatrix}
0 & e_1 & f_1 \\
-e_1 & 0 & g_1 \\
-f_1 & -g_1 & 0
\end{pmatrix}
\begin{pmatrix}
\nu_1 \\
\nu_2 \\
\nu_3
\end{pmatrix}, 
$$
$$
\begin{pmatrix}
\nu_{1v} \\
\nu_{2v} \\
\nu_{3v}
\end{pmatrix}
=
\begin{pmatrix}
0 & e_2 & f_2 \\
-e_2 & 0 & g_2 \\
-f_2 & -g_2 & 0
\end{pmatrix}
\begin{pmatrix}
\nu_1 \\
\nu_2 \\
\nu_3
\end{pmatrix},
$$
where $a_i,b_i,c_i,e_i,f_i,g_i:U \to \R, i=1,2$ are smooth functions with $a_1b_2-a_2b_1=0$.
We call the functions {\it basic invariants} of the generalised framed surface.
We denote the above matrices by $\mathcal{G}, \mathcal{F}_1, \mathcal{F}_2$, respectively.
We also call the matrices $(\mathcal{G}, \mathcal{F}_1, \mathcal{F}_2)$ {\it basic invariants} of the generalised framed surface $(\bx,\nu_1,\nu_2)$.
By definition, we have
$$
\alpha(u,v)={\det}
\begin{pmatrix}
b_1(u,v) & c_1(u,v)\\
b_2(u,v) & c_2(u,v)
\end{pmatrix}, \
\beta(u,v)=-{\det}
\begin{pmatrix}
a_1(u,v) & c_1(u,v)\\
a_2(u,v) & c_2(u,v)
\end{pmatrix}.
$$

Since the integrability conditions $\bx_{uv}=\bx_{vu}$ and $\mathcal{F}_{2u}-\mathcal{F}_{1v}
=\mathcal{F}_1\mathcal{F}_2-\mathcal{F}_2\mathcal{F}_1$,
the basic invariants should be satisfied the following conditions:
\begin{align}
&
\begin{cases}\label{integrability.condition.tangent-GFS}
a_{1v}-b_1e_2-c_1f_2 = a_{2u}-b_2e_1-c_2f_1, \\
b_{1v}+a_1e_2-c_1g_2 = b_{2u}+a_2e_1-c_2g_1, \\
c_{1v}+a_1 f_2 + b_1 g_2 =c_{2u}+ a_2 f_1 + b_2 g_1,
\end{cases}
\\
&
\begin{cases}\label{integrability.condition.normal-GFS}
e_{1v}-f_1g_2 = e_{2u}-f_2g_1, \\
f_{1v}-e_2g_1 = f_{2u}-e_1g_2, \\
g_{1v}-e_1f_2 = g_{2u}-e_2f_1.
\end{cases}
\end{align}

We give fundamental theorems for generalised framed surfaces, that is,
the existence and uniqueness theorems for the basic invariants of generalised framed surfaces.

\begin{theorem}[Existence Theorem for generalised framed surfaces \cite{Takahashi-Yu}]\label{existence.GFS}
Let $U$ be a simply connected domain in $\R^2$ and let $(a_i,b_i,c_i,e_i,f_i,g_i):U \to \mathbb{R}^{12}, i=1,2$ be a smooth mapping satisfying $a_1b_2-a_2b_1=0$, the integrability conditions \eqref{integrability.condition.tangent-GFS} and \eqref{integrability.condition.normal-GFS}.
Then there exists a generalised framed surface $(\bx,\nu_1,\nu_2):U \to \R^3 \times \Delta$ whose associated basic invariants are $(\mathcal{G},\mathcal{F}_1,\mathcal{F}_2)$.
\end{theorem}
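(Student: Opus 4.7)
The plan is to reconstruct the frame $(\nu_1,\nu_2,\nu_3)$ first, and then recover $\bx$ by integration along it, with the condition $a_1b_2-a_2b_1=0$ serving only at the very end to force $\bx_u\times\bx_v$ to lie in the span of $\nu_1,\nu_2$. This is in close parallel with the proof of Theorem \ref{existence.framed.surface}, so the main task is to verify that the extended coefficients $c_i$ integrate into the scheme correctly.

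First, I would fix a base point $(u_0,v_0)\in U$ and a positively oriented orthonormal frame $\{\nu_1^0,\nu_2^0,\nu_3^0\}$ of $\R^3$ with $\nu_3^0=\nu_1^0\times\nu_2^0$, and consider the linear PDE system
$$
\Phi_u=\mathcal{F}_1\Phi,\qquad \Phi_v=\mathcal{F}_2\Phi,
$$
for $\Phi=\trans{(\nu_1,\nu_2,\nu_3)}$. The compatibility condition $\mathcal{F}_{2u}-\mathcal{F}_{1v}=\mathcal{F}_1\mathcal{F}_2-\mathcal{F}_2\mathcal{F}_1$ is precisely the six scalar relations in \eqref{integrability.condition.normal-GFS}. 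Since $U$ is simply connected, Frobenius integration yields a unique smooth solution $\Phi$ with $\Phi(u_0,v_0)=\trans{(\nu_1^0,\nu_2^0,\nu_3^0)}$. The skew-symmetry of $\mathcal{F}_1$ and $\mathcal{F}_2$ implies that the Gram matrix $\Phi\trans{\Phi}$ is constant along every path in $U$, so it equals $I_3$ throughout, and a similar continuity argument keeps $\nu_3=\nu_1\times\nu_2$. Hence $(\nu_1,\nu_2):U\to\Delta$.

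Next, define the $\R^3$-valued one-form $\omega=X_1\,du+X_2\,dv$ with
$$
X_1=a_1\nu_1+b_1\nu_2+c_1\nu_3,\qquad X_2=a_2\nu_1+b_2\nu_2+c_2\nu_3.
$$
Substituting the frame equations into $\partial_v X_1-\partial_u X_2$ and collecting coefficients against the (linearly independent) $\nu_1,\nu_2,\nu_3$ gives exactly the three tangent integrability conditions \eqref{integrability.condition.tangent-GFS}. So $d\omega=0$, and on the simply connected domain $U$ there is $\bx:U\to\R^3$ with $\bx_u=X_1$, $\bx_v=X_2$. A direct expansion in the orthonormal triple yields
$$
\bx_u\times\bx_v=(b_1c_2-b_2c_1)\nu_1-(a_1c_2-a_2c_1)\nu_2+(a_1b_2-a_2b_1)\nu_3,
$$
so the hypothesis $a_1b_2-a_2b_1=0$ kills the $\nu_3$-component and gives $\nu=\alpha\nu_1+\beta\nu_2$ with precisely the $\alpha,\beta$ of Definition \ref{generalised.framed.surface}. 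Thus $(\bx,\nu_1,\nu_2)$ is a generalised framed surface with the prescribed basic invariants $(\mathcal{G},\mathcal{F}_1,\mathcal{F}_2)$.

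There is no real obstacle here beyond bookkeeping: the argument is a classical two-stage Frobenius integration, and both integrability conditions \eqref{integrability.condition.tangent-GFS}–\eqref{integrability.condition.normal-GFS} are tailored to make each stage succeed. The one point that deserves care is that the extra normal coefficients $c_i$ and the attitude equations for $\nu_3$ do enter the compatibility of $X_1,X_2$; writing out $\partial_v X_1-\partial_u X_2$ coefficient by coefficient and matching with \eqref{integrability.condition.tangent-GFS} is the only nontrivial verification, and the condition $a_1b_2-a_2b_1=0$ is not used until the final step, where it characterises being a generalised (rather than merely framed) base surface.
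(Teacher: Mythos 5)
Your proof is correct; the paper itself states this theorem without proof (citing \cite{Takahashi-Yu}), and your two-stage Frobenius argument---first integrating the compatible frame system $\Phi_u=\mathcal{F}_1\Phi$, $\Phi_v=\mathcal{F}_2\Phi$ with orthonormality preserved by skew-symmetry, then integrating the closed one-form built from $\mathcal{G}$, with $a_1b_2-a_2b_1=0$ used only to kill the $\nu_3$-component of $\bx_u\times\bx_v$---is exactly the standard proof used in that reference. The only nitpick is that \eqref{integrability.condition.normal-GFS} comprises three scalar relations (the independent entries of a skew-symmetric $3\times3$ matrix equation), not six.
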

\begin{definition}
{\rm
Let $(\bx,\nu_1,\nu_2), (\widetilde{\bx},\widetilde{\nu}_1,\widetilde{\nu}_2):U \to \mathbb{R}^3 \times \Delta$ be generalised framed surfaces.
We say that $(\bx,\nu_1,\nu_2)$ and $(\widetilde{\bx},\widetilde{\nu}_1,\widetilde{\nu}_2)$ are {\it congruent as generalised framed surfaces} if there exist a constant rotation $A \in SO(3)$ and a translation $\ba \in \mathbb{R}^3$ such that $\widetilde{\bx}(u,v) = A(\bx(u,v)) +\ba$, $\widetilde{\nu}_1(u,v) = A(\nu_1(u,v))$ and $\widetilde{\nu}_2(u,v) = A(\nu_2(u,v))$ for all $(u,v) \in U$.
}
\end{definition}
\begin{theorem}[Uniqueness Theorem for generalised framed surfaces \cite{Takahashi-Yu}]\label{uniqueness.GFS}
Let $(\bx,\nu_1,\nu_2),\\ (\widetilde{\bx},\widetilde{\nu}_1,\widetilde{\nu}_2):U \to \R^3 \times \Delta$ be generalised framed surfaces with basic invariants  $(\mathcal{G},\mathcal{F}_1,\mathcal{F}_2), (\widetilde{\mathcal{G}},\widetilde{\mathcal{F}}_1,\\ \widetilde{\mathcal{F}}_2)$, respectively.
Then $(\bx,\nu_1,\nu_2)$ and $(\widetilde{\bx},\widetilde{\nu}_1,\widetilde{\nu}_2)$ are congruent as generalised framed surfaces if and only if the basic invariants $(\mathcal{G},\mathcal{F}_1,\mathcal{F}_2)$ and $(\widetilde{\mathcal{G}},\widetilde{\mathcal{F}}_1,\widetilde{\mathcal{F}}_2)$ coincide.
\end{theorem}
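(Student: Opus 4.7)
The plan is to dispatch the trivial direction first, then build the congruence in the substantive direction by transporting one frame onto the other at a single basepoint and using ODE-type uniqueness to propagate the coincidence. For the ``only if'' direction, assume $(\bx,\nu_1,\nu_2)$ and $(\widetilde{\bx},\widetilde{\nu}_1,\widetilde{\nu}_2)$ are congruent via $(A,\ba)$ with $A\in SO(3)$. Since $A$ is linear it commutes with $\partial_u$ and $\partial_v$, and since $A\in SO(3)$ it preserves the inner product and the cross product; in particular $\widetilde{\nu}_3=A(\nu_3)$. Expanding $\widetilde{\bx}_u,\widetilde{\bx}_v$ and $\widetilde{\nu}_{iu},\widetilde{\nu}_{iv}$ in the frame $\{\widetilde{\nu}_1,\widetilde{\nu}_2,\widetilde{\nu}_3\}$ and comparing coefficients immediately forces $(\widetilde{\mathcal{G}},\widetilde{\mathcal{F}}_1,\widetilde{\mathcal{F}}_2) = (\mathcal{G},\mathcal{F}_1,\mathcal{F}_2)$.

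For the converse, assume the basic invariants coincide, and fix a basepoint $(u_0,v_0)\in U$. Because both $\{\nu_1(u_0,v_0),\nu_2(u_0,v_0),\nu_3(u_0,v_0)\}$ and $\{\widetilde{\nu}_1(u_0,v_0),\widetilde{\nu}_2(u_0,v_0),\widetilde{\nu}_3(u_0,v_0)\}$ are positively oriented orthonormal bases of $\R^3$, there is a unique $A\in SO(3)$ with $A(\nu_i(u_0,v_0))=\widetilde{\nu}_i(u_0,v_0)$ for $i=1,2,3$. Set $\ba = \widetilde{\bx}(u_0,v_0) - A(\bx(u_0,v_0))$, and define the transported triple $\widehat{\bx} = A\circ\bx+\ba$, $\widehat{\nu}_i = A\circ\nu_i$. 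By the ``only if'' computation applied to $(\bx,\nu_1,\nu_2)$ versus $(\widehat{\bx},\widehat{\nu}_1,\widehat{\nu}_2)$, the latter is again a generalised framed surface with the same basic invariants $(\mathcal{G},\mathcal{F}_1,\mathcal{F}_2)$.

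Thus $(\widehat{\bx},\widehat{\nu}_1,\widehat{\nu}_2)$ and $(\widetilde{\bx},\widetilde{\nu}_1,\widetilde{\nu}_2)$ satisfy the same linear system \eqref{integrability.condition.normal-GFS} in the moving frame and the same first-order equations for the base point, and they agree at $(u_0,v_0)$. I would finish by invoking uniqueness of solutions to this compatible first-order linear system on the simply connected domain $U$: restrict to the $u$-axis through $(u_0,v_0)$ and apply standard linear-ODE uniqueness to conclude $\widehat{\nu}_i=\widetilde{\nu}_i$ and $\widehat{\bx}=\widetilde{\bx}$ along that axis; then at each fixed $u$, repeat along the $v$-line. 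Path-independence of the resulting extension is exactly what the integrability conditions \eqref{integrability.condition.tangent-GFS}, \eqref{integrability.condition.normal-GFS} guarantee, so the equality propagates to all of $U$, giving $\widetilde{\bx}=A(\bx)+\ba$, $\widetilde{\nu}_1=A(\nu_1)$, $\widetilde{\nu}_2=A(\nu_2)$.

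I expect the main obstacle to be purely notational: writing down the ODE-uniqueness step cleanly given that the system is a $2$-parameter PDE system rather than an ODE. However, since the frame equations are linear in $(\nu_1,\nu_2,\nu_3)$ and the base point is then recovered by a single integration of a known right-hand side, the argument is essentially identical to the one underlying Theorem~\ref{uniqueness.framed.surface}, so referring to that proof (or mirroring it line by line) should make the bookkeeping painless.
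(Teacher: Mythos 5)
Your proposal is correct, and it is the standard moving-frame uniqueness argument: the paper itself states Theorem \ref{uniqueness.GFS} without proof, quoting it from \cite{Takahashi-Yu}, and your two directions (coefficient comparison under an $SO(3)$-congruence for ``only if''; aligning the frames at a basepoint and invoking uniqueness for the linear frame system for ``if'') are exactly the argument underlying that reference and the analogous Theorems \ref{uniqueness.Legendre} and \ref{uniqueness.framed.surface}. One cosmetic remark: the integrability conditions \eqref{integrability.condition.tangent-GFS}--\eqref{integrability.condition.normal-GFS} are not actually needed here (they matter for existence, not uniqueness) --- since both frames are already globally defined solutions of the same linear system, the set where they agree is open by local ODE uniqueness and closed by continuity, hence all of the connected domain $U$, which also avoids the issue that a single $u$-line followed by $v$-lines need not reach every point of a general simply connected domain.
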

Note that a framed surface is a generalised framed surface.
Conversely, we have the following.
\begin{theorem}[\cite{Takahashi-Yu}]\label{FBS.condition}
Let $(\bx,\nu_1,\nu_2):U \to \R^3 \times \Delta$ be a generalised framed surface with $\nu=\alpha\nu_1+\beta\nu_2$.
\par
$(1)$ If $\bx$ is a framed base surface, then the functions $\alpha$ and $\beta$ are linearly  dependent.
\par
$(2)$ Suppose that the set of regular points of $\bx$ is dense in $U$. 
If  the functions $\alpha$ and $\beta$ are linearly  dependent, then $\bx$ is a framed base surface at least locally.
\end{theorem}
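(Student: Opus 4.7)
The strategy is to carry out linear algebra inside the moving frame $\{\nu_1,\nu_2,\nu_3\}$, combining the defining identity $a_1b_2-a_2b_1=0$ of a generalised framed surface with the always-valid identities $a_i\alpha+b_i\beta=0$ ($i=1,2$); the latter come from $\bx_u\cdot\nu=\bx_v\cdot\nu=0$ upon substituting $\nu=\alpha\nu_1+\beta\nu_2$. The $2\times 2$ minors of the coefficient matrix of $(\bx_u,\bx_v)$ in the frame are, up to signs, exactly $a_1b_2-a_2b_1=0$, $\alpha=b_1c_2-b_2c_1$, and $\beta=a_2c_1-a_1c_2$, and this matching of minors drives both implications.

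For $(1)$, assume a smooth unit normal $\bn=n_1\nu_1+n_2\nu_2+n_3\nu_3$ to $\bx$ exists. The orthogonality $\bx_u\cdot\bn=\bx_v\cdot\bn=0$ is the $2\times 3$ linear system
\[
a_in_1+b_in_2+c_in_3=0\qquad(i=1,2).
\]
Eliminating $n_3$ by forming $c_2\cdot(\text{first})-c_1\cdot(\text{second})$ gives the pointwise identity $\alpha n_2-\beta n_1=0$ on $U$, so $\lambda:=n_2$ and $\mu:=-n_1$ are smooth functions with $\lambda\alpha+\mu\beta\equiv 0$, exhibiting the linear dependence. At any point where $(\lambda,\mu)=(0,0)$ one has $\bn=\pm\nu_3$; feeding this back into the original system forces $c_1=c_2=0$, and together with $a_1b_2-a_2b_1=0$ this gives $\nu=(a_1b_2-a_2b_1)\nu_3=0$, so $(\alpha,\beta)$ also vanishes there. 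Hence the relation is genuinely nontrivial wherever either of $\alpha,\beta$ is nonzero.

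For $(2)$, take smooth $\lambda,\mu$ with $\lambda\alpha+\mu\beta\equiv 0$ and $(\lambda(p_0),\mu(p_0))\neq(0,0)$ at a chosen $p_0\in U$. On a neighborhood $V$ of $p_0$ on which $\lambda^2+\mu^2>0$, I would put
\[
\bn:=\frac{-\mu\,\nu_1+\lambda\,\nu_2}{\sqrt{\lambda^2+\mu^2}},\qquad \bs:=\frac{\lambda\,\nu_1+\mu\,\nu_2}{\sqrt{\lambda^2+\mu^2}}.
\]
Both are smooth, $|\bn|=|\bs|=1$, and $\bn\cdot\bs=0$, so $(\bn,\bs):V\to\Delta$. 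The remaining conditions $\bx_u\cdot\bn=\bx_v\cdot\bn=0$ are equivalent to $\lambda b_i-\mu a_i=0$ for $i=1,2$. At regular points of $\bx$, where $(\alpha,\beta)\neq 0$, the two relations $a_i\alpha+b_i\beta=0$ and $\lambda\alpha+\mu\beta=0$ place both $(a_i,b_i)$ and $(\lambda,\mu)$ in the line of $\R^2$ perpendicular to the nonzero vector $(\alpha,\beta)$; they are therefore parallel, which is precisely $\lambda b_i-\mu a_i=0$. The density hypothesis then transfers this identity from the regular set to all of $V$ by continuity, completing the framed structure.

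The main obstacle is part $(1)$, since the linear relation derived above is only manifestly useful where the coefficients $(n_2,-n_1)$ do not simultaneously vanish. The supplementary identities $\alpha n_3=\beta n_3=0$, obtained by eliminating $n_1$ (or $n_2$) from the $2\times 3$ system using $a_1b_2-a_2b_1=0$, pin down exactly what happens at such degenerate points: either $n_3=0$ and we are back in the non-degenerate situation, or $(\alpha,\beta)$ already vanishes and the relation holds trivially. Everything else is bookkeeping with the basic invariants in the frame.
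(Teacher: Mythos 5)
The paper does not prove this statement: it is quoted verbatim from the reference [Takahashi-Yu] and used as a black box (only direction (2) is actually invoked, in Section 3), so there is no in-paper proof to compare against. Judged on its own, your argument is correct and is essentially the natural one. The identities $a_i\alpha+b_i\beta=0$ and the identification of $\alpha,\beta$ with the $2\times2$ minors are right, the elimination in (1) does yield $\alpha n_2-\beta n_1=0$, the supplementary identities $\alpha n_3=\beta n_3=0$ correctly show that $(\alpha,\beta)$ vanishes wherever $(n_1,n_2)$ does, and in (2) the frame $\bn=(-\mu\nu_1+\lambda\nu_2)/\sqrt{\lambda^2+\mu^2}$, $\bs=(\lambda\nu_1+\mu\nu_2)/\sqrt{\lambda^2+\mu^2}$ together with the perpendicularity argument at regular points and the density/continuity step gives a genuine framed surface locally. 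A slightly more geometric shortcut for (1) is available: at regular points $\nu=\bx_u\times\bx_v$ spans the normal line, so $\bn=\pm\nu/|\nu|$ and the relation $n_2\alpha-n_1\beta=0$ is immediate, while at singular points $\alpha=\beta=0$; your purely algebraic elimination handles both cases uniformly, which is a mild advantage.

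The only point to tighten is the conclusion of (1) against the paper's working definition of linear dependence. In Section 3 the authors require a coefficient pair $(k_1,k_2)$ that is \emph{nowhere} zero, whereas your pair $(\lambda,\mu)=(n_2,-n_1)$ vanishes exactly on the set where $\bn=\pm\nu_3$. You correctly observe that $(\alpha,\beta)=(0,0)$ there, so the relation is non-degenerate wherever it needs to be, but passing from this to a globally nowhere-vanishing smooth pair (if that is the intended meaning of dependence) requires an extra patching remark, e.g.\ working locally or replacing $(\lambda,\mu)$ by a unit multiple off its zero set. Since the theorem's conclusion in (2) is only claimed ``at least locally,'' and your local construction there is sound, this is a definitional caveat rather than a gap in the mathematics.
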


\section{Helicoidal surfaces of frontals}

Let $(\gamma,\nu):I \to \R^2 \times S^1$ be  a Legendre curve with the curvature $(\ell,\beta)$. 
We denote $\gamma(u)=(x(u),z(u))$ and $\nu(u)=(a(u),b(u))$. 
By definition and the Frenet type formula (\ref{Frenet.frontal}), we have $\dot{x}(u)a(u)+\dot{z}(u)b(u)=0$, $a^2(u)+b^2(u)=1$, 
\begin{equation}\label{Frenet-type}
\left(\begin{array}{c} 
\dot{x}(u) \\
\dot{z}(u)
\end{array}\right)=
\beta(u)
\left(\begin{array}{c} 
-b(u) \\
a(u)
\end{array}\right), 
\left(\begin{array}{c} 
\dot{a}(u) \\
\dot{b}(u)
\end{array}\right)=
\ell(u)
\left(\begin{array}{c} 
-b(u) \\
a(u)
\end{array}\right)
\end{equation}
for all $u \in I$. 
We consider $\R^2 \subset \R^3$ as $(x,z)$-plane into $(x,y,z)$-space. 
We call $\gamma$ {\it a profile curve} (cf. \cite{Gray}).

\begin{definition}{\rm
For the frontal $\gamma:I \to \R^2, \gamma(u)=(x(u),z(u))$, we say that 
\begin{align}\label{helicoidal-surface}
r:I \times \R \to \R^3, r(u,v)=(x(u)\cos v,x(u)\sin v,z(u)+\lambda v)
\end{align}
is a {\it helicoidal surface} (or,  {\it screw surface}), where $\lambda$ is a non-zero constant.
}
\end{definition}

By a direct calculation, we have 
\begin{align*}
r_u(u,v) &=\beta(u)(-b(u)\cos v,-b(u)\sin v,a(u)), \\
r_v(u,v) &=(-x(u)\sin v,x(u)\cos v,\lambda).
\end{align*}
Since
\begin{align*}
\nu(u,v) &=r_u(u,v) \times r_v(u,v)\\
&=-\beta(u)x(u)(a(u)\cos v,a(u)\sin v,b(u))+\beta(u)b(u)\lambda(-\sin v,\cos v,0),
\end{align*}
$(u,v)$ is a singular point of $r$ if and only if $\beta(u)=0$ or $(b(u),x(u))=(0,0)$. 
Therefore, we have three cases: 
\begin{align*}
(1) & \ \beta(u)=0, \ (x(u),b(u)) \not=(0,0), \\
(2) & \ \beta(u)\not=0, \ (x(u),b(u))=(0,0), \\
(3) & \ \beta(u)=0, \ (x(u),b(u))=(0,0).
\end{align*}
Since $x(u)=0$ is a special case of the singular point of $r$, we consider this case in section \ref{S4}.
\par
We denote $\nu_1(u,v)=(a(u)\cos v,a(u)\sin v,b(u))$ and $\nu_2(u,v)=(-\sin v,\cos v,0)$.
\begin{proposition}
Under the above notations, 
$(r,\nu_1,\nu_2):I \times \R \to \R^3 \times \Delta$ is a generalised framed surface with 
$\alpha_r(u,v)=-\beta(u)x(u), \beta_r(u,v)=\beta(u)b(u) \lambda$ and the basic invariants
\begin{align*}
\begin{pmatrix}
a_1(u,v)& b_1(u,v) & c_1(u,v)\\
a_2(u,v) & b_2(u,v) & c_2(u,v)
\end{pmatrix}
&= \begin{pmatrix}
0 & 0 & \beta(u) \\
\lambda b(u) & x(u) & \lambda a(u)
\end{pmatrix}
,\\
\begin{pmatrix}
e_1(u,v) & f_1(u,v) & g_1(u,v)\\
e_2(u,v) & f_2(u,v) & g_2(u,v)
\end{pmatrix}
&=  \begin{pmatrix}
0 & \ell(u) & 0 \\
a(u) & 0 & b(u)
\end{pmatrix}.
\end{align*}
\end{proposition}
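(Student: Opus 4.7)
The plan is to verify all three assertions by direct computation using the Frenet-type formula \eqref{Frenet-type} for the profile Legendre curve, and then identify coefficients in the moving frame.

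First I would check that $(\nu_1,\nu_2)$ takes values in $\Delta$: since $a^2+b^2=1$, both $|\nu_1|=|\nu_2|=1$, and $\nu_1\cdot\nu_2=-a\cos v\sin v+a\sin v\cos v=0$. A direct cross product gives the third frame vector
\[
\nu_3(u,v)=\nu_1(u,v)\times\nu_2(u,v)=(-b(u)\cos v,-b(u)\sin v,a(u)).
\]
The expression for $\nu=r_u\times r_v$ already displayed in the excerpt may then be read as $\nu=-\beta(u)x(u)\,\nu_1+\beta(u)b(u)\lambda\,\nu_2$, which simultaneously establishes that $(r,\nu_1,\nu_2)$ is a generalised framed surface and identifies $\alpha_r,\beta_r$.

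Next I would decompose $r_u,r_v$ in the frame $\{\nu_1,\nu_2,\nu_3\}$. Using \eqref{Frenet-type} one sees immediately that
\[
r_u(u,v)=\beta(u)(-b(u)\cos v,-b(u)\sin v,a(u))=\beta(u)\,\nu_3(u,v),
\]
so $(a_1,b_1,c_1)=(0,0,\beta)$. For $r_v=(-x\sin v,x\cos v,\lambda)$, matching the three coordinates against $a_2\nu_1+b_2\nu_2+c_2\nu_3$ yields the system $a_2a-c_2b=0$, $b_2=x$, $a_2b+c_2a=\lambda$; using $a^2+b^2=1$ this solves to $(a_2,b_2,c_2)=(\lambda b,x,\lambda a)$, which is the stated first matrix. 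In particular $a_1b_2-a_2b_1=0$, as required by the definition of a generalised framed surface.

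For the connection coefficients I would differentiate $\nu_1$ and $\nu_2$ and again read off the components. Using \eqref{Frenet-type}, $\nu_{1u}=(\dot a\cos v,\dot a\sin v,\dot b)=\ell(u)\,\nu_3$ and $\nu_{2u}=0$, giving $(e_1,f_1,g_1)=(0,\ell,0)$. Similarly $\nu_{1v}=a(u)\,\nu_2$ and $\nu_{2v}=(-\cos v,-\sin v,0)$; the latter, expanded in the frame, forces $a^2+g_2b=1$ and $a(g_2-b)=0$, i.e. $g_2=b(u)$ (consistent globally since $a^2+b^2=1$), while $e_2=a$ and $f_2=0$. This matches the stated second matrix.

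No step is a genuine obstacle: the only mild subtlety is the decomposition of $r_v$ and of $\nu_{2v}$ in the non-orthonormalised sense along $\nu_1,\nu_3$, where one has to notice that the algebraic identities one gets are compatible precisely because $a^2+b^2=1$. Once the coefficients are identified, the two integrability conditions \eqref{integrability.condition.tangent-GFS} and \eqref{integrability.condition.normal-GFS} are automatic because $(r,\nu_1,\nu_2)$ is constructed from smooth data on $I\times\R$, so no separate check is needed.
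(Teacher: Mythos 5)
Your proposal is correct and follows essentially the same route as the paper: a direct computation of $\nu_3=\nu_1\times\nu_2$, the decomposition of $r_u$, $r_v$ in the frame $\{\nu_1,\nu_2,\nu_3\}$, and differentiation of the frame to read off $e_i,f_i,g_i$. The only cosmetic difference is that you solve small linear systems for $a_2,b_2,c_2$ and $g_2$ where the paper reads the coefficients off directly (equivalently, via inner products with the orthonormal frame), and you explicitly record the checks that $(\nu_1,\nu_2)\in\Delta$ and that $\nu=-\beta x\,\nu_1+\beta b\lambda\,\nu_2$, which the paper establishes in the discussion immediately preceding the proposition.
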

\demo
By a direct calculation, we have $r_u(u,v) =\beta(u)(-b(u)\cos v,-b(u)\sin v,a(u))$ and $r_v(u,v) =(-x(u)\sin v,x(u)\cos v,\lambda)$. Since
\begin{align*}
\nu_3(u,v) &=\nu_1(u,v) \times\nu_2(u,v)=(-b(u)\cos v,-b(u)\sin v,a(u)),
\end{align*}
and $r_u(u,v)=\beta(u)\nu_3(u,v)$, we have $a_1(u,v)=0, b_1(u,v)=0$ and $c_1(u,v)=\beta(u)$. 
Moreover, since 
$$
r_v(u,v)=a_2(u,v)\nu_1(u,v)+b_2(u,v)\nu_2(u,v)+c_2(u,v)\nu_3(u,v),
$$ 
we have $a_2(u,v)=\lambda b(u), b_2(u,v)=x(u)$ and $c_2(u,v)=\lambda a(u)$. 
By a direct calculation, we have
\begin{align*}
\nu_{1u}(u,v)&=\ell(u)(-b(u)\cos v, -b(u)\sin v, a(u)),\\
\nu_{2u}(u,v)&=(0, 0, 0),\\
\nu_{3u}(u,v)&=-\ell(u)(a(u)\cos v, a(u)\sin v, b(u)),\\
\nu_{1v}(u,v)&=(-a(u)\sin v, a(u)\cos v, 0),\\
\nu_{2v}(u,v)&=(-\cos v, -\sin v, 0),\\
\nu_{3v}(u,v)&=(b(u)\sin v, -b(u)\cos v, 0).
\end{align*}
Therefore, we have $e_1(u,v)=0, f_1(u,v)=\ell(u), g_1(u,v)=0, e_2(u,v)=a(u), f_2(u,v)=0$ and $g_2(u,v)=b(u)$. 
\enD
Suppose that the set of regular points of $r$ is dense in $I \times \R$.
Since $\alpha_r$ and $\beta_r$ does not depend on the parameter $v$, we 
denote $\alpha_r(u,v)$ and $\beta_r(u,v)$ by $\alpha_r(u)$ and $\beta_r(u)$, respectively. 
If $\alpha_r$ and $\beta_r$ are linearly independent, that is, there exists a non-zero smooth mapping $(k_1,k_2):I \to \R^2 \setminus \{0\}$ such that 
\begin{align}\label{linearly-dependent}
k_1(u) \alpha_r(u)+k_2(u)\beta_r(u)=0
\end{align} 
for all $u \in I$, then the helicoidal surface $r$ is a framed base surface at least locally by Theorem \ref{FBS.condition}. 
Moreover, by the assumption, the set of regular points of the frontal $\gamma$ is also dense in $I$, that is, the set of points of $\beta(t) \not=0$ is dense in $I$, then condition \eqref{linearly-dependent} is given by  
$-k_1(u) x(u)+k_2(u)b(u)\lambda=0$ for all $u \in I$. 
Since $(k_1(u),k_2(u)) \not=(0,0)$, we may assume that $k_1^2(u)+k^2_2(u)=1$ for all $u \in I$. 
Then we denote 
\begin{align*}
\bn(u,v)&=(k_2(u)a(u)\cos v+k_1(u)\sin v, k_2(u)a(u)\sin v-k_1(u)\cos v,k_2(u)b(u)), \\
\bs(u,v)&=(-b(u)\cos v,-b(u)\sin v,a(u)).
\end{align*}

\begin{proposition} \label{basic-invariants-framed-surface}
Suppose that there exists $(k_1,k_2):I \to \R^2$ with $k_1^2(u)+k^2_2(u)=1$ such that $-k_1(u) x(u)+k_2(u)b(u)\lambda=0$ for all $u \in I$. 
Then $(r,\bn,\bs):I \times \R \to \R^3 \times \Delta$ is a framed surface with the basic invariants 
\begin{align*}
\begin{pmatrix}
a_1(u,v)& b_1(u,v) \\
a_2(u,v) & b_2(u,v)
\end{pmatrix}
&= \begin{pmatrix}
\beta(u) & 0 \\
\lambda a(u) & -k_2(u)x(u)-k_1(u)b(u)\lambda
\end{pmatrix}
,\\
\begin{pmatrix}
e_1(u,v) & f_1(u,v) & g_1(u,v)\\
e_2(u,v) & f_2(u,v) & g_2(u,v)
\end{pmatrix}
&=  \begin{pmatrix}
k_2(u)\ell(u) & k_2(u)\dot{k}_1(u)-k_1(u)\dot{k}_2(u) & k_1(u)\ell(u) \\
-k_1(u)b(u) & -a(u) & k_2(u)b(u)
\end{pmatrix}.
\end{align*}
\end{proposition}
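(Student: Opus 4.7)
The plan is to verify the claim directly from the definitions by (i) checking that $(\bn,\bs)$ lies in $\Delta$ and that $r_u\cdot\bn=r_v\cdot\bn=0$, and then (ii) expanding $r_u,r_v$ and the frame derivatives in the moving frame $\{\bn,\bs,\bt\}$ with $\bt=\bn\times\bs$ and reading off the coefficients.

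\textbf{Step 1 (Frame check).} First I would check that $|\bs|=1$ and $|\bn|=1$: $\bs$ uses $a^2+b^2=1$, and $\bn$ uses the same together with $k_1^2+k_2^2=1$. The orthogonality $\bn\cdot\bs=0$ is a short computation using $a^2+b^2=1$ and the cancellation between $\cos v$ and $\sin v$ terms. Then $(\bn,\bs)\in\Delta$. The equality $r_u\cdot\bn=0$ follows from the fact that $r_u$ is a multiple of $\nu_3=(-b\cos v,-b\sin v,a)$, which is orthogonal to $\bn$ by the same kind of cancellation. For $r_v\cdot\bn=0$ I would use precisely the hypothesis $-k_1x+k_2 b\lambda=0$: indeed $r_v\cdot\bn = x\cdot(-k_2 a\sin v\cos v\cdot\text{(cancels)})-k_1 x+\lambda k_2 b$, which collapses to $-k_1 x+k_2 b\lambda=0$.

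\textbf{Step 2 (Tangential coefficients).} Compute $\bt=\bn\times\bs$ explicitly; using $k_1^2+k_2^2=a^2+b^2=1$ one gets
\[
\bt(u,v)=(-k_1 a\cos v+k_2\sin v,\,-k_1 a\sin v-k_2\cos v,\,-k_1 b).
\]
Now expand $r_u = \beta(-b\cos v,-b\sin v,a)$ and $r_v=(-x\sin v,x\cos v,\lambda)$ in the basis $\{\bs,\bt\}$. Since $r_u=\beta\,\bs$, I read off $a_1=\beta$, $b_1=0$. For $r_v$, computing $r_v\cdot\bs$ gives $\lambda a$ and $r_v\cdot\bt$ gives, after using $-k_1 x+k_2 b\lambda=0$, the value $-k_2 x-k_1 b\lambda$ (this is where one must keep track of signs carefully); so $a_2=\lambda a$, $b_2=-k_2 x-k_1 b\lambda$, matching $\mathcal{G}$.

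\textbf{Step 3 (Frame derivatives).} Differentiate $\bn$ and $\bs$ using the Frenet-type relations $\dot a=-\ell b$, $\dot b=\ell a$ from \eqref{Frenet-type}, and then take inner products with $\bn,\bs,\bt$ to read off the six coefficients. The $v$-derivatives give the simplest computations; I expect $\bs_v\cdot\bn=-k_1 b=-e_2$ (after sign), $\bs_v\cdot\bt=-a=-g_2$-wait, actually $\bs_v\cdot\bt$ recovers $g_2=b$ etc. The $u$-derivatives use $\ell$ to produce $e_1=k_2\ell$ and $g_1=k_1\ell$, while the term $f_1=k_2\dot k_1-k_1\dot k_2$ is the only one that picks up derivatives of the auxiliary functions $k_i$: it arises from $\bn_u\cdot\bs$, where the $\ell$-contributions cancel thanks to $a^2+b^2=1$ and only the $\dot k_i\sin v/\cos v$ terms survive, contracted against $\bs$.

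\textbf{Expected obstacle.} None of the individual identities is deep, but the bookkeeping is the actual difficulty: each of the six entries $e_i,f_i,g_i$ requires combining the profile-curve Frenet formulas, the normalisation $k_1^2+k_2^2=1$ (whose differentiation gives $k_1\dot k_1+k_2\dot k_2=0$, needed to tidy up $f_1$), and the hypothesis $-k_1 x+k_2 b\lambda=0$. The term $f_1=k_2\dot k_1-k_1\dot k_2$ is the subtlest one, since one must separate the contribution of the rotation of $(k_1,k_2)$ from the rotation of $(a,b)$; the identity $k_1\dot k_1+k_2\dot k_2=0$ lets one write the surviving $\dot k_i$ terms in the compact skew form appearing in the statement. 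Once this entry is settled, the rest of the table follows by analogous direct computation, and no integrability check is needed since the framed surface condition has already been verified in Step 1.
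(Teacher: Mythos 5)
Your proposal is correct and follows essentially the same route as the paper's proof: compute $\bt=\bn\times\bs$, expand $r_u=\beta\bs$ and $r_v$ in $\{\bs,\bt\}$, then differentiate the frame and read off the coefficients (your Step 1, explicitly verifying $r_u\cdot\bn=r_v\cdot\bn=0$ via the hypothesis $-k_1x+k_2b\lambda=0$, is left implicit in the paper). One bookkeeping slip to fix when executing Step 3: since $\bn_u=e_1\bs+f_1\bt$, the entry $f_1=k_2\dot{k}_1-k_1\dot{k}_2$ is $\bn_u\cdot\bt$, not $\bn_u\cdot\bs$ (the latter is $e_1=k_2\ell$), and in that pairing the $\ell$-contributions cancel using only $a^2+b^2=1$.
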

\demo
By a direct calculation, we have $r_u(u,v) =\beta(u)(-b(u)\cos v,-b(u)\sin v,a(u))$ and $r_v(u,v) =(-x(u)\sin v,x(u)\cos v,\lambda)$. Since 
\begin{align*}
\bt(u,v)&=\bn(u,v)\times\bs(u,v)\\
&=(k_2(u)\sin v+k_1(u)a(u)\cos v, -k_2(u)\cos v-k_1(u)a(u)\sin v,-k_1(u)b(u)),
\end{align*}
and $r_u(u,v)=\beta(u)\bs(u,v)$, we have $a_1(u,v)=\beta(u)$ and $b_1(u,v)=0$. Moreover, since $r_v(u,v)=a_2(u,v)\bs(u,v)+b_2(u,v)\bt(u,v)$, we have $a_2(u,v)=\lambda a(u)$ and $b_2(u,v)=-k_2(u)x(u)-k_1(u)b(u)\lambda$. 
By a direct calculation, we have
\begin{align*}
\bn_{u}(u,v)&=
((\dot{k}_2(u)a(u)-k_2(u)\ell(u)b(u))\cos v+\dot{k}_1(u)\sin v, \\
&\qquad (\dot{k}_2(u)a(u)-k_2(u)\ell(u)b(u))\sin v-\dot{k}_1(u)\cos v, \\
&\qquad \dot{k}_2(u)b(u)+k_2(u)\ell(u){a}(u)),\\
\bs_{u}(u,v)&=(\ell(u){a}(u)\cos v, \ell(u){a}(u)\sin v, -\ell(u){b}(u)),\\
\bn_{v}(u,v)&=(-k_2(u)a(u)\sin v+k_1(u)\cos v, k_(u)a(u)\cos v+k_1(u)\sin v, 0),\\
\bs_{v}(u,v)&=(b(u)\sin v, -b(u)\cos v, 0).
\end{align*}
Therefore, we have 
$e_1(u,v)=k_2(u)\ell(u), f_1(u,v)=k_2(u)\dot{k}_1(u)-k_1(u)\dot{k}_2(u), g_1(u,v)=k_1(u)\ell(u),\\
e_2(u,v)=k_1(u)b(u), f_2(u,v)=-a(u)$ and $g_2(u,v)=k_2(u)b(u).$
\enD
By a direct calculation, we have the curvature of the helicoidal surface of the frontal.
\begin{corollary}\label{curvature}
By the same assumption of Proposition \ref{basic-invariants-framed-surface}, we have the curvature of the helicoidal surface of the frontal as follows.
\begin{align*}
J^F(u,v) &=-\beta(u)(k_2(u)x(u)+k_1(u)b(u)\lambda),\\
K^F(u,v) &= (k_2(u)b(u))_u=\dot{k}_2(u)b(u)+k_2(u)\ell(u) a(u),\\
H^F(u,v) &= -\frac{1}{2}\Bigl(-\beta(u)a(u)-\lambda a(u)(k_2(u)\dot{k}_1(u)-k_1(u)\dot{k}_2(u))\\
&\quad -(k_2(u)x(u)+k_1(u)b(u)\lambda)k_2(u)\ell(u) \Bigr).
\end{align*}
\end{corollary}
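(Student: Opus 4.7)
The plan is simply to apply Definition~\ref{curvature.framed.surface} to the basic invariants computed in Proposition~\ref{basic-invariants-framed-surface}. Two auxiliary ingredients are needed to simplify what comes out: the Frenet-type relations \eqref{Frenet-type} for the profile curve (most importantly $\dot b(u) = \ell(u) a(u)$), and the differentiated form $k_1(u) \dot k_1(u) + k_2(u) \dot k_2(u) = 0$ of the normalisation $k_1^2(u)+k_2^2(u) = 1$ assumed in Proposition~\ref{basic-invariants-framed-surface}.

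For $J^F$ the calculation is immediate: since $b_1(u,v) = 0$, we have $J^F = a_1 b_2$ and the stated formula follows by direct substitution. For $H^F$ I would expand the two $2\times 2$ determinants appearing in Definition~\ref{curvature.framed.surface} and substitute the basic invariants; since the entries $a_i, b_i, e_i, f_i$ are already in the desired form, no further simplification should be required and the stated expression falls out.

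The only step that requires a bit of work is $K^F$. After substitution, the determinant $e_1 f_2 - e_2 f_1$ involves the derivatives $\dot k_1$ and $\dot k_2$. I would use the identity $k_1 \dot k_1 + k_2 \dot k_2 = 0$ to eliminate $\dot k_1$, after which $k_1^2 + k_2^2 = 1$ lets the mixed terms collapse to a single expression involving $\dot k_2 b$ and $k_2 \ell a$ only. Finally, recognising $\dot b = \ell a$ from \eqref{Frenet-type}, this sum reassembles as $(k_2(u) b(u))_u = \dot k_2(u) b(u) + k_2(u) \dot b(u)$, yielding the claimed equality.

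No serious obstacle is anticipated; the argument is essentially bookkeeping. The one conceptual point is the interplay between the normalisation $k_1^2 + k_2^2 = 1$ and the Frenet formula for $\dot b$, which together allow $K^F$ to be recognised as a single $u$-derivative, giving the compact form $(k_2 b)_u$ for the second component of the curvature.
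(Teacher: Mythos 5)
Your proposal follows exactly the paper's (unstated) argument: the corollary is presented as a direct calculation, namely substituting the basic invariants of Proposition~\ref{basic-invariants-framed-surface} into Definition~\ref{curvature.framed.surface} and simplifying with $\dot b=\ell a$ and $k_1\dot k_1+k_2\dot k_2=0$, which is precisely what you describe. One caveat: if you carry out the $K^F$ computation with the invariants exactly as displayed in Proposition~\ref{basic-invariants-framed-surface} (where $e_2=-k_1b$), the determinant $e_1f_2-f_1e_2$ comes out as $-(\dot k_2 b+k_2\ell a)=-(k_2b)_u$ rather than $(k_2b)_u$, so the ``stated expression falls out'' only up to a sign discrepancy that is already present in the paper between the proposition and the corollary.
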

\par
In particular, if $(x(u),b(u)) \not=(0,0)$ for all $u \in I$, then we can always take 
$$
k_1(u)=\frac{b(u)\lambda}{\sqrt{x^2(u)+b^2(u)\lambda^2}}, \ k_2(u)=\frac{x(u)}{\sqrt{x^2(u)+b^2(u)\lambda^2}}.
$$
Therefore, in this case, the helicoidal surface $r$ is a framed base surface (or, a frontal).
\par
The next we consider a geometric property of helicoidal surfaces. 
We investigate parallel surfaces of the helicoidal surface. 
We use the following notations. 
\par
Let $(\gamma,\nu):I \to \R^2 \times S^1$ be a Legendre curve with \eqref{Frenet-type}. 
We denote $r=r[\gamma]: I \times \R \to \R^3$ is a helicoidal surface, that is, 
$r[\gamma](u,v)=(x(u)\cos v,x(u)\sin v,z(u)+\lambda v)$, where $\lambda$ is a non-zero constant  \eqref{helicoidal-surface}. 
We define a smooth curve $s=s[r[\gamma]]: I \to \R^2$ by 
$$
s[r[\gamma]](u)=\left( x(u)\cos \frac{z(u)}{\lambda}, -x(u)\sin \frac{z(u)}{\lambda} \right)
$$ 
and call it a {\it slice curve} of the helicoidal surface $r[\gamma]$. 
The slice curve cuts the helicoidal surface of the $xy$-plane and it is useful to investigate singular points of helicoidal surfaces in \S 4. 

We consider relation between parallel surfaces of the helicoidal surface and parallel curves of slice curves. 
 
Suppose that there exists $(k_1,k_2):I \to \R^2$ with $k_1^2(u)+k^2_2(u)=1$ such that $-k_1(u) x(u)+k_2(u)b(u)\lambda=0$ for all $u \in I$. 
Then $(r,\bn,\bs):I \times \R \to \R^3 \times \Delta$ is a framed surface by Proposition \ref{basic-invariants-framed-surface}. 
The parallel surface of the helicoidal surface is given by 
\begin{align*}
r[\gamma]^{\widetilde{t}}(u,v)&=r[\gamma](u,v)+\widetilde{t}\bn(u,v) \\
&=((x(u)+\widetilde{t}k_2(u)a(u))\cos v+\widetilde{t}k_1(u)\sin v, \\
&\qquad (x(u)+\widetilde{t}k_2(u)a(u))\sin v-\widetilde{t}k_1(u)\cos v, \\
&\qquad z(u)+\lambda v+\widetilde{t}k_2(u)b(u))
\end{align*}
for $\widetilde{t} \in \R \setminus \{0\}$. 
Suppose that there exist smooth functions $A, \theta:I \to \R$ such that 
$$
x(u)+\widetilde{t}k_2(u)a(u)=A(u)\cos \theta(u), \ \widetilde{t}k_1(u)=A(u)\sin \theta(u).
$$
Then 
$$
r[\gamma]^{\widetilde{t}}(u,v)=(A(u)\cos(v-\theta(u)),A(u)\sin(v-\theta(u)),z(u)+\lambda(v-\theta(u))+\lambda \theta(u)+\widetilde{t}k_2(u)b(u)).
$$
By the parameter change $v-\theta(u)=\widetilde{v}$, we have 
$$
r[\gamma]^{\widetilde{t}}(u,\widetilde{v})=(A(u)\cos \widetilde{v},A(u)\sin \widetilde{v},z(u)+\lambda \widetilde{v}+\lambda \theta(u)+\widetilde{t}k_2(u)b(u)).
$$
It follows that the slice curve of the parallel surface of helicoidal surface is given by
\begin{align}\label{slice-parallel}
s[r[\gamma]^{\widetilde{t}}](u)=\left(A(u) \cos \frac{z(u)+\lambda \theta(u)+\widetilde{t}k_2(u)b(u)}{\lambda},  -A(u) \sin \frac{z(u)+\lambda \theta(u)+\widetilde{t}k_2(u)b(u)}{\lambda} \right).
\end{align}

We give a condition that the slice curve of helicoidal surface is a frontal.

\begin{proposition}\label{slice-curve-frontal}
Suppose that there exists $(\ell_1,\ell_2):I \to \R^2$ with $\ell_1^2(u)+\ell^2_2(u)=1$ such that $\ell_1(u) b(u)\lambda +\ell_2(u)x(u)a(u)=0$ for all $u \in I$. 
Then $(s[r(\gamma)],\nu^s):I \to \R^2 \times S^1$ is a Legendre curve with the curvature 
\begin{align*} 
\ell^s(u) &=-a(u)\beta(u)+\ell_1(u)\dot{\ell}_2(u)-\dot{\ell}_1(u)\ell_2(u),\\
\beta^s(u) &=(\ell_1(u)x(u)a(u)-\ell_2(u)b(u))\beta(u).
\end{align*}
\end{proposition}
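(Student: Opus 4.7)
The plan is to write down $\dot{s}$ explicitly via the Frenet type formula \eqref{Frenet-type}, use the hypothesis to guess a candidate $\nu^s$, verify the Legendre condition, and then read off $(\ell^s, \beta^s)$ from the Frenet type formula \eqref{Frenet.frontal} applied to $(s, \nu^s)$.

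First, writing $\phi(u) = z(u)/\lambda$ and using $\dot{x} = -\beta b$, $\dot{z} = \beta a$, a direct differentiation of
$s(u) = (x(u)\cos\phi(u), -x(u)\sin\phi(u))$
shows that $\dot{s}$ is, up to the scalar $-\beta$, the image of the vector $(b,\,xa/\lambda)$ under the rotation of angle $-\phi$. The hypothesis $\ell_1 b\lambda + \ell_2 xa = 0$ with $\ell_1^2+\ell_2^2=1$ says precisely that $(\ell_1,\ell_2)$ is a unit vector perpendicular to $(b,\,xa/\lambda)$. Applying the same rotation to $(\ell_1,\ell_2)$ therefore produces a unit vector perpendicular to $\dot{s}$, which motivates the definition
\[
\nu^s(u) = \bigl(\ell_1(u)\cos\phi(u) + \ell_2(u)\sin\phi(u),\; -\ell_1(u)\sin\phi(u) + \ell_2(u)\cos\phi(u)\bigr).
\]
Since rotations are orthogonal, the inner product $\dot{s}\cdot\nu^s$ collapses to $-\beta\,(b,xa/\lambda)\cdot(\ell_1,\ell_2) = 0$, so $(s,\nu^s)$ is a Legendre curve. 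The norm $|\nu^s|=1$ is automatic from $\ell_1^2+\ell_2^2=1$.

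Next, since the standard $J$ commutes with planar rotations, $\mu^s = J\nu^s$ is obtained from the formula above by replacing $(\ell_1,\ell_2)$ with $(-\ell_2,\ell_1)$. The curvature $\beta^s = \dot{s}\cdot\mu^s$ then reduces, by the same orthogonality argument, to $-\beta\,(b,xa/\lambda)\cdot(-\ell_2,\ell_1)$, which yields the claimed closed form for $\beta^s$. For $\ell^s = \dot{\nu}^s\cdot\mu^s$, one differentiates $\nu^s$ by the product rule: the terms in which $\phi$ is differentiated contribute (after collecting) $-\dot{\phi}$ times $|\mu^s|^2=1$, while the terms in which $\ell_1,\ell_2$ are differentiated contribute $\ell_1\dot{\ell}_2 - \dot{\ell}_1\ell_2$ (the diagonal $\ell_i\dot{\ell}_i$ contributions cancel thanks to $\ell_1^2+\ell_2^2=1$). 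Substituting $\dot{\phi} = \beta a/\lambda$ recovers the formula for $\ell^s$.

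The only real obstacle is the bookkeeping in the last step: expanding $\dot{\nu}^s\cdot\mu^s$ naively generates a mass of products of $\sin\phi$ and $\cos\phi$. The structural reason the computation stays clean is that the map $(\ell_1,\ell_2)\mapsto\nu^s$ is an isometry intertwining the usual inner product on $\R^2$ with the inner product pulled back along the rotation by $-\phi$; consequently every inner product appearing in the derivation reduces to a $2\times 2$ computation in the $(\ell_1,\ell_2)$-variables, where the constraints $\ell_1^2+\ell_2^2=1$ and $\ell_1 b\lambda + \ell_2 xa = 0$ immediately simplify the answer.
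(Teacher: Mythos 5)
Your method is the same as the paper's: exhibit $\nu^s$ explicitly as the rotation by $-\phi$ (where $\phi=z/\lambda$) of the unit vector $(\ell_1,\ell_2)$, check $\dot{s}\cdot\nu^s=0$ from the hypothesis, and read off $\ell^s=\dot{\nu}^s\cdot\mu^s$ and $\beta^s=\dot{s}\cdot\mu^s$; your observation that every inner product descends to the $(\ell_1,\ell_2)$-variables because the rotation is orthogonal and commutes with $J$ is a tidier bookkeeping of the paper's direct expansion, not a different route. One concrete issue: the paper's normal is $\nu^s=\bigl(-\ell_1\cos\phi-\ell_2\sin\phi,\ \ell_1\sin\phi-\ell_2\cos\phi\bigr)$, which is the \emph{negative} of your $R_{-\phi}(\ell_1,\ell_2)$. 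Replacing $\nu^s$ by $-\nu^s$ leaves $\ell^s$ invariant but reverses the sign of $\beta^s$, so your computation actually produces $\beta^s=\beta\bigl(\ell_2 b-\ell_1 xa/\lambda\bigr)$ rather than the stated $\beta\bigl(\ell_1 xa-\ell_2 b\bigr)$; the final step of your argument, asserting that the orthogonality reduction yields the claimed closed form, fails for your choice of normal, and you must take the opposite sign to land on the proposition's formula. Separately, your factors of $1/\lambda$ (coming from $\dot{\phi}=\beta a/\lambda$, hence appearing in both $\ell^s$ and $\beta^s$) are the internally consistent ones given the hypothesis $\ell_1 b\lambda+\ell_2 xa=0$; the paper's displayed derivative of $s[r[\gamma]]$ and its stated curvature drop this factor, so the residual mismatch there is not a defect of your argument.
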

\demo
Differentiating $s[r[\gamma]](u)$ and noticing \eqref{Frenet-type}, we have 
\begin{align*}
\frac{d}{du} s[r[\gamma]](u)
&=\biggl(-\beta(u)b(u)\cos\frac{z(u)}{\lambda}-x(u)\beta(u)a(u)\sin\frac{z(u)}{\lambda},\\
&\hspace{20mm}\beta(u)b(u)\sin\frac{z(u)}{\lambda}-x(u)\beta(u)a(u)\cos\frac{z(u)}{\lambda} \biggr).
\end{align*}
Moreover, by differentiating 
$$
\nu^s(u)=\left(-\ell_2(u)\sin\frac{z(u)}{\lambda}-\ell_1(u)\cos\frac{z(u)}{\lambda}, -\ell_2(u)\cos\frac{z(u)}{\lambda}+\ell_1(u)\sin\frac{z(u)}{\lambda}\right),
$$ 
we have
\begin{align*}
\frac{d}{du} \nu^s(u) &=\biggl(\left(\ell_1(u)\dot{z}(u)-\dot{\ell_2}(u)\right)\sin\frac{z(u)}{\lambda}-\left(\ell_2(u)\dot{z}(u)+\dot{\ell_1}(u)\right)\cos\frac{z(u)}{\lambda},\\
&\qquad\left(\ell_2(u)\dot{z}(u)+\dot{\ell_1}(u)\right)\sin\frac{z(u)}{\lambda}+\left(\ell_1(u)\dot{z}(u)-\dot{\ell_2}(u)\right)\cos\frac{z(u)}{\lambda}\biggr).
\end{align*}
We put on $\mu^s(u)=J(\nu^s (u))$, the curvature $(\ell^s, \beta^s)$ is given by $\ell^s(u)=\dot{\nu}^s(u)\cdot\mu^s(u)$ and $\beta^s(u)=\dot{s}[r[\gamma]](u)\cdot\mu^s(u)$. Therefore, we have the assertion.
\enD
Suppose that there exists $(\ell_1,\ell_2):I \to \R^2$ with $\ell_1^2(u)+\ell^2_2(u)=1$ such that $\ell_1(u) b(u) \lambda+\ell_2(u) x(u) a(u)=0$ for all $u \in I$. 
Then $(s[r[\gamma]],\nu^s):I \to \R^2 \times S^1$ is a Legendre curve by Proposition \ref{slice-curve-frontal}. 
The parallel curve of the slice curve is given by 
\begin{align*}
s[r[\gamma]]^t (u)&=s[r[\gamma]](u)+t \nu^s(u) \\
&=\Bigl((x(u)-t\ell_1(u) )\cos \frac{z(u)}{\lambda}-t \ell_2(u)\sin \frac{z(u)}{\lambda}, \\
&\qquad -\Bigl((x(u)-t\ell_1(u) )\cos \frac{z(u)}{\lambda}-t \ell_2(u)\sin \frac{z(u)}{\lambda}\Bigr)\Bigr)
\end{align*}
for $t \in \R \setminus \{0\}$. 
Suppose that there exist smooth functions $B, \tau: I \to \R$ such that 
$$
x(u)-t \ell_1(u)=B(u)\cos \tau(u), \ -t \ell_2(u)=B(u)\sin \tau(u).
$$
Then 
$$
s[r[\gamma]]^t (u)=\left(B(u)\cos \frac{z(u)-\lambda \tau(u)}{\lambda}, -B(u) \sin \frac{z(u)-\lambda \tau(u)}{\lambda}\right).
$$
We denote $\widetilde{\gamma}(u)=(B(u),z(u)-\lambda \tau(u))$. 
Then 
$$
r[\widetilde{\gamma}](u,v)=(B(u)\cos v,B(u)\sin v,z(u)-\lambda \tau(u)+\lambda v)
$$
and 
$s[r[\widetilde{\gamma}]](u)=s[r[\gamma]]^t (u)$.
Then we have the following result.
\begin{proposition}\label{relations-parallel-slice}
Under the above notations, 
if a parallel surface of the helicoidal surface and the helicoidal surface of a parallel curve of the slice curve coincide, that is, $r[\gamma]^{\widetilde{t}}(u,v)=r[\widetilde{\gamma}](u,v)$, then 
$x(u)$ is a constant and $\gamma$ is a part of a line.
\end{proposition}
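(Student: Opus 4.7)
The plan is to equate the two displayed expressions for $r[\gamma]^{\widetilde{t}}(u,v)$ and $r[\widetilde{\gamma}](u,v)$ and to exploit that the identity must hold for every $v\in\R$ with the common parameter $v$. The first coordinate of $r[\gamma]^{\widetilde{t}}(u,v)$ reads $(x(u)+\widetilde{t}k_2(u)a(u))\cos v+\widetilde{t}k_1(u)\sin v$, while the first coordinate of $r[\widetilde{\gamma}](u,v)$ is simply $B(u)\cos v$. Since $\cos v$ and $\sin v$ are linearly independent as functions of $v$, matching coefficients immediately yields $x(u)+\widetilde{t}k_2(u)a(u)=B(u)$ and, crucially, $\widetilde{t}k_1(u)=0$. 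Because $\widetilde{t}\neq 0$ by hypothesis, this forces $k_1(u)\equiv 0$.

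With $k_1\equiv 0$, the normalisation $k_1^2+k_2^2=1$ gives $k_2(u)=\pm 1$, a non-zero constant, and the defining relation $-k_1(u)x(u)+k_2(u)b(u)\lambda=0$ of Proposition \ref{basic-invariants-framed-surface} collapses to $k_2(u)b(u)\lambda=0$. Since $\lambda\neq 0$ and $|k_2(u)|=1$, this forces $b(u)\equiv 0$; then $a^2+b^2=1$ gives $a(u)=\pm 1$, so $\nu$ is a constant unit vector. The Frenet type formula \eqref{Frenet-type} now yields $\dot{x}(u)=-\beta(u)b(u)\equiv 0$, so $x(u)$ is a constant, while $\dot{\gamma}(u)=\beta(u)\mu(u)$ points along the constant direction $\mu=J(\nu)=(0,\pm 1)$. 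Hence $\gamma$ is contained in the line $\{x=\mathrm{const.}\}$, which is the stated conclusion.

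I do not anticipate a substantial obstacle. The only care needed is to work with the original $(u,v)$-expression for $r[\gamma]^{\widetilde{t}}$, whose $\cos v,\sin v$ dependence is manifest, rather than with the post-reparameterisation $(u,\widetilde{v})$-form obtained via $\widetilde{v}=v-\theta(u)$; the identification in the statement is made in the common parameter $v$, so the extra terms $\widetilde{t}k_1(u)\sin v$ and $-\widetilde{t}k_1(u)\cos v$ must remain visible for the coefficient-matching step to do its work. The third coordinate equation reduces to $\widetilde{t}k_2(u)b(u)=-\lambda\tau(u)$, which under the conclusion $b\equiv 0$ gives $\tau\equiv 0$; this provides a consistency check but is not needed for the main argument.
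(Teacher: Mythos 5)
Your proof is correct and follows essentially the same route as the paper: both arguments reduce the coincidence condition to $k_1\equiv 0$, hence $b\equiv 0$ via $-k_1x+k_2b\lambda=0$, and then read off $\dot{x}=-\beta b=0$ and the constancy of $\nu$ from the Frenet-type formula \eqref{Frenet-type}. The only difference is cosmetic: you obtain $\widetilde{t}k_1(u)=0$ directly by matching the $\cos v$ and $\sin v$ coefficients in the unreparameterised expression, which neatly bypasses the paper's detour through $A^2=B^2$ and the case analysis on $\theta(u)$ modulo $\pi$.
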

\demo
By the condition $r[\gamma]^{\widetilde{t}}(u,v)=r[\widetilde{\gamma}](u,v)$, we have 
\begin{align*}
& A(u) \cos(v-\theta(u)) =B(u)\cos v,\ A(u) \sin(v-\theta(u))=B(u) \sin v, \\
& z(u)+\lambda v+\widetilde{t}k_2(u)b(u)=z(u)-\lambda \tau(u)+\lambda v.
\end{align*}
It follows that $A^2(u)=B^2(u)$ and $\widetilde{t} k_2(u)b(u)=-\lambda \tau(u)$ for all $u \in I$. 
If $A(u)=B(u)$, then $v-\theta(u)=v+2n \pi$, where $n \in \mathbb{Z}$, and hence $\theta=-2n\pi$. 
Therefore, we have $k_1(u)=0$ and $b(u)=0$. 
By \eqref{Frenet-type}, $x(u)$ is a constant and $\ell(u)=0$ for all $u \in I$, that is, $\gamma$ is a part of a line. 
If $A(u)=-B(u)$, then $v-\theta(u)=v+(2n+1)\pi$, where $n \in \mathbb{Z}$, and hence $\theta=-(2n+1)\pi$. 
Therefore, we also have $k_1(u)=0$ and $b(u)=0$. 
By \eqref{Frenet-type}, $x(u)$ is a constant and $\ell(u)=0$ for all $u \in I$, that is, $\gamma$ is a part of a line. 
\enD
In general, we consider relations between $s[r[\gamma]^{\widetilde{t}}]$ and $s[r[\gamma]]^{t}$. 
Since 
\begin{align*}
s[r[\gamma]^{\widetilde{t}}](u)&=\left(A(u) \cos \frac{z(u)+\lambda \theta(u)+\widetilde{t}k_2(u)b(u)}{\lambda},  -A(u) \sin \frac{z(u)+\lambda \theta(u)+\widetilde{t}k_2(u)b(u)}{\lambda} \right)\\
&= \begin{pmatrix}
\cos \dfrac{\widetilde{t}k_2(u)b(u)}{\lambda} & \sin \dfrac{\widetilde{t}k_2(u)b(u)}{\lambda} \\
- \sin \dfrac{\widetilde{t}k_2(u)b(u)}{\lambda} & \cos \dfrac{\widetilde{t}k_2(u)b(u)}{\lambda}
\end{pmatrix}
\begin{pmatrix}
A \cos \dfrac{z(u)+\lambda \theta(u)}{\lambda}\\
-A \sin \dfrac{z(u)+\lambda \theta(u)}{\lambda}
\end{pmatrix},
\end{align*}
if $A=B$ and $\theta=-\tau$, then 
$x+\widetilde{t}k_2a=x-t \ell_1$ and $\widetilde{t}k_1=t\ell_2$. 
Then $t=\pm \widetilde{t}\sqrt{1-k_2^2 b^2}$. 
It follows that 
$$
s[r[\gamma]^{\widetilde{t}}](u)=M(\widetilde{t},u)s[r[\gamma]]^{t(u)}(u),
$$
where 
$$
M(\widetilde{t},u)=\begin{pmatrix}
\cos \dfrac{\widetilde{t}k_2(u)b(u)}{\lambda} & \sin \dfrac{\widetilde{t}k_2(u)b(u)}{\lambda} \\
- \sin \dfrac{\widetilde{t}k_2(u)b(u)}{\lambda} & \cos \dfrac{\widetilde{t}k_2(u)b(u)}{\lambda}
\end{pmatrix}.
$$
Note that $\widetilde{t}$ is a constant, but $t$ is not a constant. 
If $k_2(u)b(u)$ is a constant, then $t$ and $M$ are a constant.

\section{Singularities of helicoidal surfaces}\label{S4}
\begin{definition}{\rm
$(1)$ Let $f$ and $g:(\R^m,0)\to(\R^n,0)$ be map-germs. 
Then $f$ is {\it $\mathcal{A}$-equivalent} to $g$ if there exist 
diffeomorphism-germs 
$\varphi:(\R^m,0)\to(\R^m,0)$ and $\Phi:(\R^n,0)\to(\R^n,0)$ 
such that $g=\Phi\circ f\circ \varphi^{-1}$ holds. 
\par
$(2)$ Let $\gamma:(I,u_0)\to(\R^2,0)$ be a curve-germ. 
We say that $\gamma$ at $u_0$ is a {\it $j/i$-cusp}, where $(i,j)=(2,3),(2,5),(3,4),(3,5)$ if $\gamma$ is $\mathcal{A}$-equivalent to the germ $t\mapsto(t^i,t^j)$ at the origin.
\par
$(3)$ Let $f:(\R^2,0)\to(\R^3,0)$ be a map-germ. 
We say that $f$ at $0$ is a {\it $j/i$-cuspidal edge}, where $(i,j)=(2,3),(2,5),(3,4),(3,5)$ if $f$ is $\mathcal{A}$-equivalent to the germ $(u,v)\mapsto(u,v^i,v^j)$ at the origin. 
A $3/2$-cuspidal edge is also called a {\it cuspidal edge}.

$(4)$ Let $f:(\R^2,0)\to(\R^3,0)$ be a map-germ,
and let $\gamma:(\R,0)\to(\R^2,0)$ be a curve-germ.
We say that $f$ at $0$ is a {\it $\gamma$-edge} if
$f$ is $\mathcal{A}$-equivalent to the germ $(u,v)\mapsto(\gamma(u),v)$ at the origin.
}
\end{definition}

For curves with $j/i$-cusps, the following criteria are known (cf. \cite{Bruce-Gaffney, Porteous}). 
\begin{proposition}\label{fact:criteria}
Let $\gamma:I\to\R^2$ be a curve and $u_0\in I$ a singular point of $\gamma$, 
namely, $\dot{\gamma}(u_0)=0$. 
Then the following assertions hold.
\par
$(1)$ $\gamma$ has a $3/2$-cusp at $u_0$ if and only if $\det(\ddot{\gamma},\dddot{\gamma})(u_0)\neq0$.
\par
$(2)$ $\gamma$ has a $5/2$-cusp at $u_0$ if and only if 
$\ddot{\gamma}(u_0)\neq0$, $\dddot{\gamma}(u_0)=k\ddot{\gamma}(u_0)$ for some constant $k\in\R$ 
and $\det(\ddot{\gamma},3\gamma^{(5)}-10k\gamma^{(4)})(u_0)\neq0$.
\par
$(3)$ $\gamma$ has a $4/3$-cusp at $u_0$ if and only if $\ddot{\gamma}(u_0)=0$ and 
$\det(\dddot{\gamma},\gamma^{(4)})(u_0)\neq0$.
\par
$(4)$ $\gamma$ has a $5/3$-cusp at $u_0$ if and only if $\ddot{\gamma}(u_0)=0$, 
$\det(\dddot{\gamma},\gamma^{(4)})(u_0)=0$ and $\det(\dddot{\gamma},\gamma^{(5)})(u_0)\neq0$.
\end{proposition}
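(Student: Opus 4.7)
My plan is a case-by-case reduction to the claimed normal form $(t^i,t^j)$ via $\mathcal{A}$-equivalence, using the Taylor expansion at the singular point. Translate so that $u_0=0$ and (absorbing a target translation) $\gamma(0)=0$; since $\dot\gamma(0)=0$,
\[
\gamma(u) \;=\; \sum_{k\ge 2}\tfrac{1}{k!}\,\gamma^{(k)}(0)\,u^k.
\]
Necessity in each case is routine: the stated determinantal quantities can be evaluated directly on the normal form $t\mapsto(t^i,t^j)$, and their non-vanishing at $0$ is preserved under the allowed source and target diffeomorphisms of a singular curve-germ.

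For sufficiency I would use a unified four-step strategy. First, identify two derivative vectors (possibly as a linear combination dictated by the hypothesis) whose linear independence is exactly the stated determinantal condition. Second, apply a linear change $A\in GL(2,\R)$ in the target sending them to a standard basis. Third, reparametrize the source $u=s+c_2 s^2+c_3 s^3+\cdots$ to align the leading coordinate with $s^i$. Fourth, apply a target diffeomorphism $(X,Y)\mapsto(X,\,Y+\phi(X,Y))$ to kill the remaining higher-order noise in $Y$. Case (1) uses $\{\ddot\gamma(0),\dddot\gamma(0)\}$ directly as the basis. Case (3) uses $\ddot\gamma(0)=0$ to shift the series to start at order three and applies the same scheme to $\{\dddot\gamma(0),\gamma^{(4)}(0)\}$. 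Case (4) uses a source correction $u=s+\beta s^2$ to absorb the $s^4$ term tangential to $\dddot\gamma(0)$ (possible by $\det(\dddot\gamma,\gamma^{(4)})(0)=0$), after which $\det(\dddot\gamma,\gamma^{(5)})(0)\neq 0$ furnishes the order-$5$ transverse contribution and yields the normal form $(s^3,s^5)$.

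Case (2) is the one that requires genuine bookkeeping. Normalize so that $\ddot\gamma(0)$ points along the first axis; the relation $\dddot\gamma(0)=k\ddot\gamma(0)$ says the cubic term is tangential. Substitute $u=s+c_2 s^2+c_3 s^3+c_4 s^4$ and choose $c_2,c_3,c_4$ to kill the $s^3$ coefficient and to make the first-component coefficients of $s^4$ and $s^5$ vanish. A direct expansion shows that with $c_2=-k/6$ the transverse component of the $s^5$ coefficient of $\gamma(u(s))$ is, up to the nonzero universal factor $1/360$, precisely the projection of $3\gamma^{(5)}(0)-10k\gamma^{(4)}(0)$ onto the transverse axis. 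Hence $\det(\ddot\gamma,\,3\gamma^{(5)}-10k\gamma^{(4)})(0)\neq 0$ gives a nonzero order-$5$ transverse contribution; a standard target diffeomorphism of the form $(X,Y)\mapsto(X,Y+\phi(X,Y))$ then removes all $s^{\ge 6}$ remainders and produces the $(s^2,s^5)$ normal form.

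The main obstacle is precisely this fifth-order computation in Case (2): one has to choose the source reparametrization coefficients simultaneously so that the first component is normalized \emph{and} track how the substitution mixes $\gamma^{(4)}(0)$ and $\gamma^{(5)}(0)$ in the transverse direction, in order to extract the specific combination $3\gamma^{(5)}-10k\gamma^{(4)}$. The other three cases reduce to straightforward Taylor bookkeeping once the leading basis has been fixed, with the target diffeomorphism in step four absorbing all terms of order $\ge j+1$ in $Y$.
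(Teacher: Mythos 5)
The paper does not prove Proposition \ref{fact:criteria} at all: it is quoted as a known recognition criterion with a pointer to Bruce--Gaffney and Porteous, so there is no in-paper argument to compare against. Your reconstruction is the standard proof of these criteria and is essentially correct. In particular the delicate bookkeeping in case (2) checks out: writing $\gamma(u)=\tfrac{1}{2}\ddot\gamma_0u^2+\tfrac{1}{6}\dddot\gamma_0u^3+\tfrac{1}{24}\gamma^{(4)}_0u^4+\tfrac{1}{120}\gamma^{(5)}_0u^5+O(u^6)$ and normalising the first coordinate to $s^2$ forces $u=s-\tfrac{k}{6}s^2+O(s^3)$, and the transverse $s^5$-coefficient becomes $\tfrac{1}{360}\bigl(3q_5-10kq_4\bigr)$ where $q_j$ is the component of $\gamma^{(j)}_0$ transverse to $\ddot\gamma_0$; the $s^4$-term is $X^2$ times a constant and its removal does not feed back into the $s^5$-coefficient, so your factor $1/360$ and the combination $3\gamma^{(5)}-10k\gamma^{(4)}$ are exactly right. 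Two points are asserted rather than argued and deserve a sentence each if this were written out in full: (i) the final step ``a target diffeomorphism removes all $s^{\ge j+1}$ remainders'' is really an appeal to the finite $\mathcal{A}$-determinacy of $(t^i,t^j)$ (for $(t^3,t^5)$ the gap monomial $t^7$ in the second slot is not in the pullback algebra and needs the source vector-field part of the tangent space, so a pure target change does not suffice); and (ii) the ``only if'' direction requires checking that each determinantal quantity transforms by a nonzero multiple under $\mathcal{A}$-equivalence --- nontrivial in case (2) because $k$ itself changes under reparametrisation, which is precisely why the covariant combination $3\gamma^{(5)}-10k\gamma^{(4)}$ appears. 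Neither point is a genuine gap; both are standard.
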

For a curve $\gamma=(x(u),z(u))$,
if $x(u)\ne0$, then the surface 
$r=(x(u)\cos v,x(u)\sin v,z(u)+\lambda v)$
is a
$\gamma$-edge, and this can be shown by a diffeomorphism-germ
$$
\Phi(X,Y,Z)=(X\cos Z,X\sin Z,Y+\lambda Z).
$$
Thus if $x \ne 0$, singularities of the helicoidal surface $r$ are like those of the surface of revolution of a profile curve $\gamma$
(cf. \cite{Martins-Saji-Santos-Teramoto, Takahashi-Teramoto}).
We stick our consideration into the case $x(u)=0$.
We show the following.
\begin{theorem}\label{thm:criteria}
Let $\gamma(u)=(x(u),z(u))$, and let $(\gamma,\nu):(I,u_0)\to\R^2\times S^1$ 
be a Legendre curve with curvature $(\ell,\beta)$.
Assume that $x(u_0)=0$. 
Then for the helicoidal surface 
$$r(u,v)=(x(u)\cos v,x(u)\sin v,z(u)+\lambda v),$$
we have the following. 
\begin{itemize}
\item[{\rm (I)}]
If $\beta(u_0)=0, x(u_0)=0, b(u_0)\ne0$, then for any $v$,
$r$ is a $5/2$-cuspidal edge at $(u_0,v)$ if and only if 
$\dot{\beta}(u_0)\ell(u_0)\ne0$. 
In this case, $r$ at $(u_0,v)$ is never be
a cuspidal edge, a $4/3$-cuspidal edge nor a $5/3$-cuspidal edge.

\item[{\rm (II)}]
If $\beta(u_0)\not=0, (x(u_0),b(u_0))=(0,0)$, then for any $v$,
$r$ is a $3/2$-cuspidal edge at $(u_0,v)$ if and only if 
$\ell(u_0)\ne0$,
and 
$r$ is a 
$4/3$-cuspidal edge at $(u_0,v)$ if and only if $\ell(u_0)=0$ and $\dot{\ell}(u_0)\ne0$.
In this case, $r$ at $(u_0,v)$ is never be
a $5/2$-cuspidal edge nor a $5/3$-cuspidal edge.

\item[{\rm (III)}]
If $\beta(u_0)=0, (x(u_0),b(u_0))=(0,0)$, then for any $v$,
$r$ is a $5/3$-cuspidal edge at $(u_0,v)$ if and only if 
$\dot{\beta}(u_0)\ell(u_0)\ne0$. 
In this case, $r$ at $(u_0,v)$ is never be
a cuspidal edge, a $5/2$-cuspidal edge nor a $4/3$-cuspidal edge. 
\end{itemize}
\end{theorem}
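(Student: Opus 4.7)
The plan is to reduce the classification of singularities of $r$ at $(u_0,v_0)$ with $x(u_0)=0$ to the classification of cusps of the slice curve $s=s[r[\gamma]]$ of Section~3 at $u_0$. Introduce the ambient diffeomorphism
\[
\Phi(X,Y,Z)=\bigl(X\cos(Z/\lambda)+Y\sin(Z/\lambda),\,-X\sin(Z/\lambda)+Y\cos(Z/\lambda),\,Z\bigr),
\]
which unwinds the helicoidal twist. Using $\cos v\cos((z+\lambda v)/\lambda)+\sin v\sin((z+\lambda v)/\lambda)=\cos(z/\lambda)$ and its sine analogue, one checks directly that $\Phi\circ r(u,v)=(s(u),\,z(u)+\lambda v)$. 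The source diffeomorphism $\tilde v=v+z(u)/\lambda$ followed by the target rescaling $Z\mapsto Z/\lambda$ then produces the normal form $(u,\tilde v)\mapsto(s(u),\tilde v)$, so $r$ at $(u_0,v_0)$ is $\mathcal A$-equivalent to $(u,v)\mapsto(s(u),v)$. In particular $r$ is a $j/i$-cuspidal edge there exactly when $s$ has a $j/i$-cusp at $u_0$, and the proof reduces to applying the criteria in Proposition~\ref{fact:criteria} to $s$.

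After a vertical translation arranging $z(u_0)=0$, I would iterate the Frenet-type formula $\dot x=-\beta b$, $\dot z=\beta a$ together with $\dot a=-\ell b$, $\dot b=\ell a$ to Taylor-expand $x(u)$ and $z(u)$, and then expand $s_1=x\cos(z/\lambda)$ and $s_2=-x\sin(z/\lambda)$ to the required order. In Case~(I), both $x$ and $z$ vanish to order $\ge 2$ at $u_0$, which forces $s_2=O((u-u_0)^4)$; consequently $\ddot s(u_0)$ and $\dddot s(u_0)$ lie along the first axis, so the $3/2$, $4/3$ and $5/3$ types are structurally impossible and the $5/2$-criterion of Proposition~\ref{fact:criteria}(2) applies with $k=\ddot\beta/\dot\beta+2\ell a/b$. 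In Case~(II), $\dot z(u_0)\ne 0$ and $\ddot x(u_0)=-\beta\ell a$, giving $\ddot s(u_0)=(-\beta\ell a,0)$, $\dddot s_2(u_0)=3\beta^2\ell a^2/\lambda$, so that $\det(\ddot s,\dddot s)(u_0)=-3\beta^3\ell^2 a^3/\lambda$; when $\ell(u_0)=0$ one has $\dddot s(u_0)=(-\beta\dot\ell a,0)$, $s_2^{(4)}(u_0)=4\beta^2\dot\ell a^2/\lambda$ and $\det(\dddot s,s^{(4)})(u_0)=-4\beta^3\dot\ell^2 a^3/\lambda$, which is automatically nonzero as soon as $\dot\ell\ne 0$ and so excludes $5/3$; the nonvanishing of $\dddot s_2$ when $\ell\ne 0$ excludes $5/2$. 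In Case~(III), $\ddot s(u_0)=0$, $\dddot s(u_0)=(-2\dot\beta\ell a,0)$, $s_2^{(4)}(u_0)=0$ automatically from the orders of vanishing, and $s_2^{(5)}(u_0)=20\dot\beta^2\ell a^2/\lambda$, giving $\det(\dddot s,s^{(5)})(u_0)=-40\dot\beta^3\ell^2 a^3/\lambda$; the remaining cusp types are excluded since $\ddot s(u_0)=0$ and $\det(\dddot s,s^{(4)})(u_0)=0$.

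The main obstacle is the $5/2$-computation in Case~(I): the combination $3s_2^{(5)}(u_0)-10k\,s_2^{(4)}(u_0)$ a priori involves $\ddot\beta$, $\dot\beta$, $\ell$, $a$ and $b$ in a rather complicated way, but collapses cleanly to $-60\dot\beta(u_0)^2\ell(u_0)/\lambda$ after the $\ddot\beta$-contributions cancel and the identity $a^2+b^2=1$ is invoked. This then yields $\det(\ddot s,3s^{(5)}-10k s^{(4)})(u_0)=60\dot\beta(u_0)^3\ell(u_0)b(u_0)/\lambda$, and hence the clean criterion $\dot\beta(u_0)\ell(u_0)\ne 0$. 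Carrying the fifth-order expansion accurately enough to see this cancellation is the technical heart of the argument.
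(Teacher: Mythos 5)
Your proposal follows essentially the same route as the paper: unwind the helicoidal twist by an ambient diffeomorphism to reduce $r$ to the suspension $(u,v)\mapsto(s(u),v)$ of the slice curve (this is the paper's Lemma \ref{lem:helieq}, with $c$ in place of $s$, a harmless reflection), then apply the cusp criteria of Proposition \ref{fact:criteria} to derivatives computed from the Frenet-type formulas; your key determinants, including $60\dot{\beta}^3\ell b/\lambda$ in Case (I) and $-40\dot{\beta}^3\ell^2a^3/\lambda$ in Case (III), agree with the paper's computations. The one point to tighten is the exclusion of $4/3$- and $5/3$-cusps in Case (I): collinearity of $\ddot{s}(u_0)$ and $\dddot{s}(u_0)$ alone does not rule these out, and you must also observe that when $\dot{\beta}(u_0)=0$ both $x$ and $z$ vanish to order $3$ at $u_0$, so $s_2=O((u-u_0)^6)$ and hence $\det(\dddot{s},s^{(4)})(u_0)=\det(\dddot{s},s^{(5)})(u_0)=0$.
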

\par
To show this Theorem, first we show the following lemma.
We set $x(u_0)=0$ and set $c:(\R,u_0)\to(\R^2,c(u_0))$ by
$$
c(u)=\left(x(u)\cos\dfrac{z(u)}{\lambda},x(u)\sin\dfrac{z(u)}{\lambda}\right).
$$
Since the slice curve $s[r[\gamma]]$ and $c$ are diffeomorphic, the singular points are the same. We consider $c$ in this section.
\begin{lemma}\label{lem:helieq}
For any $v_0$, the helicoidal surface $r$ at $(u_0,v_0)$ 
given in \eqref{helicoidal-surface}
is a $c$-edge.
\end{lemma}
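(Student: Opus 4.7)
The plan is to exhibit explicit source and target diffeomorphism-germs that carry $r$ at $(u_0,v_0)$ into the normal form $(u,v)\mapsto(c(u),v)$.

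First I would reduce to the case $v_0=0$. Setting
$$
H_{v_0}(X,Y,Z)=(X\cos v_0-Y\sin v_0,\,X\sin v_0+Y\cos v_0,\,Z+\lambda v_0),
$$
a direct computation gives $r(u,v+v_0)=H_{v_0}(r(u,v))$. Since $H_{v_0}$ is a rigid screw motion of $\R^3$ (in particular a global diffeomorphism), it suffices to prove the lemma at $(u_0,0)$. The crucial target diffeomorphism is
$$
\Phi(X,Y,Z)=\bigl(X\cos(Z/\lambda)+Y\sin(Z/\lambda),\,-X\sin(Z/\lambda)+Y\cos(Z/\lambda),\,Z\bigr),
$$
which rotates the $(X,Y)$-plane by angle $-Z/\lambda$; its Jacobian determinant equals $1$, so $\Phi$ is a genuine diffeomorphism of $\R^3$.

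Applying $\Phi$ to $r$ and using the angle-subtraction formulas with $Z=z(u)+\lambda v$, the $v$-contributions cancel inside the rotation, yielding
$$
\Phi\circ r(u,v)=\bigl(x(u)\cos(z(u)/\lambda),\,-x(u)\sin(z(u)/\lambda),\,z(u)+\lambda v\bigr),
$$
whose first two components are $(c_1(u),-c_2(u))$, now independent of $v$. To finish, I would compose with the auxiliary target diffeomorphism $\Psi(X,Y,Z)=(X,-Y,(Z-z(u_0))/\lambda)$, which flips the sign of the second coordinate and sends the base point $(0,0,z(u_0))$ to the origin, followed by the source reparametrization $\varphi(u,v)=\bigl(u,\,v+(z(u)-z(u_0))/\lambda\bigr)$, a diffeomorphism fixing $(u_0,0)$ since $\partial\varphi/\partial v=1$. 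A direct check shows $\Psi\circ\Phi\circ r\circ\varphi^{-1}(u,v)=(c(u),v)$, establishing the required $\mathcal{A}$-equivalence (after the trivial source translation moving $(u_0,0)$ to the origin).

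All the calculations are routine trigonometric manipulations once $\Phi$ is written down; the only creative ingredient is the $Z$-dependent \emph{unrotation} that decouples the angular variable $v$ from the radial profile, so no genuine obstacle arises. I would include one or two lines of trigonometric identity to justify the displayed form of $\Phi\circ r$, but otherwise the proof is essentially just writing down the three maps $\Phi$, $\Psi$, $\varphi$ and verifying the composition.
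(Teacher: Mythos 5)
Your proof is correct and uses essentially the same idea as the paper: the key ingredient in both is the $Z$-dependent rotation of the $(X,Y)$-plane by angle $\mp Z/\lambda$ in the target, which decouples the angular variable $v$ and reduces $r$ to the product form $(c(u),v)$ up to a shear in the source. The paper merely organizes the composition differently (it performs the source change $\varphi(u,v)=(u,z(u)+\lambda v-\cdots)$ first and absorbs the sign flip of the second coordinate into its $\Phi$, whereas you reduce to $v_0=0$ and split off the reflection as a separate map $\Psi$), so the two arguments are the same in substance.
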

\begin{proof}
By $\lambda\ne0$, a map-germ $\varphi:(\R^2,(u_0,v_0))\to(\R^2,(u_0,v_0))$ 
defined by
$\varphi(u,v)=(u,z(u)+\lambda v-z(u_0)-\lambda v_0+v_0)$ is a diffeomorphism-germ.
We set $(\widetilde{u},\widetilde{v})=\varphi(u,v)$.
Then
\begin{align*}
r(\varphi^{-1}(\widetilde{u},\widetilde{v}))
&=
\cos\dfrac{\widetilde w}{\lambda}
\Big(
x(\widetilde u)\cos\dfrac{z(\widetilde{u})}{\lambda},
-x(\widetilde u)\sin\dfrac{z(\widetilde{u})}{\lambda},
0\Big)\\
&\hspace{10mm}
+
\sin\dfrac{\widetilde w}{\lambda}
\Big(
x(\widetilde u)\sin\dfrac{z(\widetilde{u})}{\lambda},
x(\widetilde u)\cos\dfrac{z(\widetilde{u})}{\lambda},
0\Big)
+(0,0,\widetilde w)
\end{align*}
holds, where $\widetilde w=\widetilde v+z(u_0)+\lambda v_0-v_0$.
We set a diffeomorphism-germ 
$\Phi:(\R^3,(0,0,z(u_0)+\lambda v_0))\to(\R^3,(0,0,z(u_0)+\lambda v_0))$ by
$$
\Phi(X,Y,Z)=
\cos\dfrac{Z}{\lambda}(X,-Y,0)
+
\sin\dfrac{Z}{\lambda}(Y,X,0)
+(0,0,Z).
$$
Then we see
$\Phi^{-1}(r(\varphi^{-1}(\widetilde{u},\widetilde{v})))
=(c(\widetilde{u}),\widetilde{v}+z(u_0)+\lambda v_0-v_0)$.
Since $(c(\widetilde{u}),\widetilde{v}+z(u_0)+\lambda v_0-v_0)$ 
at $(\widetilde{u}_0,\widetilde{v}_0)=(u_0,v_0)$ is a $c$-edge,
this shows the assertion.
\end{proof}
By Lemma \ref{lem:helieq}, to study singularities of $r$,
it is enough to see the curve
$c(u)$.
From now on, we write $c$ as a column vector,
and we omit the variable $(u)$ of functions.
By \eqref{Frenet-type} and
\begin{equation}\label{eq:cossin}
\dfrac{d}{du}\pmt{\cos z/\lambda\\\sin z/\lambda}
=
\pmt{-\dfrac{\dot z}{\lambda}\sin z/\lambda\\[2mm] \dfrac{\dot z}{\lambda}\cos z/\lambda}
=
\dfrac{\beta a}{\lambda}\pmt{-\sin z/\lambda\\ \cos z/\lambda}
=
\dfrac{\beta a}{\lambda}\pmt{0&-1\\ 1&0}\pmt{\cos z/\lambda\\ \sin z/\lambda},
\end{equation}
it holds that
\begin{equation}\label{eq:gammap}
\dot c=
\beta\pmt{-b\cos z/\lambda-xa/\lambda\sin z/\lambda\\
 -b\sin z/\lambda+xa/\lambda\cos z/\lambda}\\
=
\beta\pmt{-b&-xa/\lambda\\ xa/\lambda&-b}
\pmt{\cos z/\lambda\\\sin z/\lambda}.
\end{equation}
Thus $\dot c(u_0)=0$ if and only if
\begin{itemize}
\item
$\beta(u_0)=0, x(u_0)=0, b(u_0)\not=0$, 
\item
$\beta(u_0)\not=0, (x(u_0),b(u_0))=(0,0)$ or
\item
$\beta(u_0)=0, (x(u_0),b(u_0))=(0,0)$.
\end{itemize}
Let $u=u_0$ be a singular point of $c$, then 
to show Theorem \ref{thm:criteria},
it is enough to prove the following proposition.
\begin{proposition}\label{thm:singcond}
Under the above notations, we have the following.
\begin{itemize}
\item[{\rm (I)}]
If $\beta(u_0)=0, x(u_0)=0, b(u_0)\ne0$, then 
$\ddot{c}(u_0)\ne0$ if and only if $\dot{\beta}(u_0)\ne0$,
and
$c$ is a $5/2$-cusp at $u_0$ if and only if 
$\dot{\beta}(u_0)\ell(u_0)\ne0$. 
The curve $c$ satisfies 
$\ddot{c}(u_0)=0$, $\dddot{c}(u_0)\ne0$
if and only if $\dot{\beta}(u_0)=0$, $\ddot{\beta}(u_0)=0$.
On the other hand, $c$ at $u_0$ never be
a cusp, a $4/3$-cusp nor a $5/3$-cusp.

\item[{\rm (II)}]
If $\beta(u_0)\not=0, (x(u_0),b(u_0))=(0,0)$, then
$\ddot{c}(u_0)\ne0$ if and only if $\ell(u_0)\ne0$,
in this case, $c$
is a $3/2$-cusp at $u_0$.
The curve $c$ satisfies
$\ddot{c}(u_0)=0$, $\dddot{c}(u_0)\ne0$
if and only if $\ell(u_0)=0$ and $\dot{\ell}(u_0)\ne0$,
in this case, $c$ at $u_0$ is a $4/3$-cusp.
On the other hand, $c$ at $u_0$ never be
a $5/2$-cusp nor a $5/3$-cusp.

\item[{\rm (III)}]
If $\beta(u_0)=0, (x(u_0),b(u_0))=(0,0)$, then
$\ddot{c}(u_0)=0$ holds.
The curve $c$ satisfies
$\ddot{c}(u_0)=0$, $\dddot{c}(u_0)\ne0$
$\dot{\beta}(u_0)\ell(u_0)\ne0$,
in this case, $c$ at $u_0$ is a $5/3$-cusp. 
On the other hand, $c$ at $u_0$ never be
a cusp, a $5/2$-cusp nor a $4/3$-cusp. 
\end{itemize}
\end{proposition}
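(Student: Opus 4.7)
The plan is to apply the criteria in Proposition \ref{fact:criteria} to $c$ after an identification $\R^2\cong\mathbb{C}$ that exposes the rotational structure. Writing $\tau(u)=e^{iz(u)/\lambda}$, we have $c=x\tau$, and from \eqref{Frenet-type} together with $\dot\tau=(i\beta a/\lambda)\tau$ we obtain
$$
\dot c=P\tau,\qquad P=\beta\mu,\qquad \mu=-b+i\frac{xa}{\lambda}.
$$
Introducing the first-order operator $D=\partial_u+i\beta a/\lambda$, an easy induction gives $c^{(k)}=(D^{k-1}P)\tau$ for every $k\ge 1$. Because $|\tau|=1$, the determinant $\det(c^{(i)},c^{(j)})(u_0)$ equals $\mathrm{Im}\bigl(\overline{D^{i-1}P(u_0)}\, D^{j-1}P(u_0)\bigr)$, so each criterion in Proposition \ref{fact:criteria} becomes a condition on $D^{k-1}P$ at $u_0$. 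The Frenet formulas $\dot a=-\ell b$, $\dot b=\ell a$, $\dot x=-\beta b$ let us evaluate these in closed form.

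In Case (I), $\mu(u_0)=-b(u_0)\neq 0$ is real, $\dot x(u_0)=0$ so $x$ vanishes to order $\ge 2$ at $u_0$ with $\ddot x(u_0)=-\dot\beta b$, and $f:=i\beta a/\lambda$ vanishes at $u_0$. A direct calculation gives $\ddot c(u_0)=-\dot\beta(u_0)b(u_0)\tau(u_0)$, yielding the first equivalence. The key structural observation is that the imaginary part of $\mu^{(m)}(u_0)$ requires at least two $\beta$-factors, and every $f$-correction in $D^k$ carries a $\beta$-factor; consequently $\dddot c(u_0)$ and $c^{(4)}(u_0)$ remain real multiples of $\tau(u_0)$, automatically killing the $3/2$- and $4/3$-cusp determinants. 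The first imaginary contribution appears in $c^{(5)}(u_0)$ and turns out to be proportional to $\dot\beta(u_0)^2\ell(u_0)$ times an explicit nonzero factor in $a,b,\lambda$; this gives the $5/2$-cusp equivalence $\dot\beta\ell\neq 0$ and, when $\dot\beta(u_0)=0$, forces $\det(\dddot c,c^{(5)})(u_0)=0$, ruling out the $5/3$-cusp.

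In Case (II), $\mu(u_0)=0$, so $P(u_0)=0$. Using $\dot\mu(u_0)=-\ell a$ we get $\ddot c(u_0)=-\beta\ell a\,\tau(u_0)$, while $\ddot\mu(u_0)=-\dot\ell a-i\beta\ell a^2/\lambda$ makes $\det(\ddot c,\dddot c)(u_0)$ a nonzero multiple of $\beta^3\ell^2 a^3/\lambda$. This gives the $3/2$-cusp equivalence $\ell\neq 0$, and simultaneously shows that whenever $\ddot c(u_0)\neq 0$ we have $\dddot c(u_0)\not\parallel\ddot c(u_0)$, excluding the $5/2$-cusp. When $\ell(u_0)=0$ the same scheme yields $\dddot c(u_0)=-\beta\dot\ell a\,\tau(u_0)$ and an imaginary part of $c^{(4)}(u_0)/\tau(u_0)$ proportional to $\beta^3\dot\ell a^3/\lambda$, giving the $4/3$-cusp equivalence $\dot\ell\neq 0$ and ruling out the $5/3$-cusp. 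Case (III) combines both vanishings: $P$ vanishes to order $\ge 2$ so $\ddot c(u_0)=0$ always, $\dddot c(u_0)/\tau(u_0)=-2\dot\beta\ell a$ is real, $c^{(4)}(u_0)$ is likewise real (killing the $4/3$-cusp), while $c^{(5)}(u_0)$ contributes an imaginary part proportional to $\dot\beta^2\ell/\lambda$, yielding the $5/3$-cusp equivalence and excluding the remaining types.

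The main obstacle will be the Case (I) bookkeeping for $D^4P(u_0)$: the $5/2$-cusp criterion requires identifying the precise imaginary part of $3D^4P(u_0)-10k\,D^3P(u_0)$ with $k$ chosen so that $\dddot c(u_0)=k\ddot c(u_0)$. The complex formalism reduces this to tracking which $f$-correction terms survive $\beta(u_0)=0$ after expanding $D^4$, and makes the \emph{never be} conclusions in each case automatic via the real-line principle: when all relevant $c^{(k)}(u_0)$ lie in $\R\tau(u_0)$, every determinant vanishes identically.
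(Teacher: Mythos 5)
Your complex-variable formalism is, up to notation, exactly the paper's argument: writing $c^{(n)}=w_n\tau$ with $\tau=e^{iz/\lambda}$ and $w_n=D^{n-1}P$ is the content of the paper's Lemma \ref{lem:gammamat} (the matrices there are precisely multiplication by the complex numbers $C^n_{11}+iC^n_{21}$), and your identity $\det(c^{(i)},c^{(j)})=\mathrm{Im}(\overline{w_i}\,w_j)$ is the paper's \eqref{eq:detlemma}. Cases (II) and (III) of your sketch check out against the explicit formulas \eqref{eq:c2}--\eqref{eq:c5}. So the route is the same; the issue is that most of the decisive computations are asserted rather than carried out, and one of the structural assertions you lean on is false.

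The concrete error is in Case (I). You claim that $\dddot c(u_0)$ \emph{and} $c^{(4)}(u_0)$ are real multiples of $\tau(u_0)$, so that imaginary parts first appear at order five. In fact, with $\beta(u_0)=x(u_0)=0$ one finds
$$
\mathrm{Im}\,w_4(u_0)=-\frac{6\,a(u_0)b(u_0)\dot\beta(u_0)^2}{\lambda},
$$
(this is visible in the second row of the paper's \eqref{eq:c4}: the term $-6ab(\dot\beta)^2/\lambda$ carries no factor of $\beta$ or $x$). It is nonzero exactly in the regime where the $5/2$-cusp test is performed, namely $\dot\beta(u_0)\ne0$. This matters: the $5/2$ criterion evaluates $\det(\ddot c,\,3c^{(5)}-10k\,c^{(4)})(u_0)$, and since $w_2(u_0)$ is real this equals $w_2(u_0)\bigl(3\,\mathrm{Im}\,w_5-10k\,\mathrm{Im}\,w_4\bigr)(u_0)$. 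Here $3\,\mathrm{Im}\,w_5(u_0)$ alone is $\tfrac{60}{\lambda}\bigl(\ell\dot\beta^2(b^2-a^2)-ab\dot\beta\ddot\beta\bigr)$, which still contains $\ddot\beta$; only after adding $-10k\,\mathrm{Im}\,w_4$ with $k=(2a\ell\dot\beta+b\ddot\beta)/(b\dot\beta)$ do the $\ddot\beta$-terms cancel and the expression collapse to $\tfrac{60}{\lambda}(a^2+b^2)\ell\dot\beta^2$. Your ``real-line principle'' would have you discard the $c^{(4)}$ contribution and arrive at a wrong criterion involving $\ddot\beta(u_0)$. (For the exclusion of the $4/3$-cusp the claim is harmless, because there one first imposes $\ddot c(u_0)=0$, i.e.\ $\dot\beta(u_0)=0$, which kills $\mathrm{Im}\,w_4(u_0)$.) To repair the argument you must state the structural principle correctly --- imaginary parts of $w_n(u_0)$ require at least two factors drawn from $\{\beta,\dot\beta,\ddot\beta,\dots\}$ together with the vanishing $\dot x(u_0)=0$, which only forces $\mathrm{Im}\,w_2(u_0)=\mathrm{Im}\,w_3(u_0)=0$ --- and then actually compute $\mathrm{Im}\,w_4(u_0)$ and $\mathrm{Im}\,w_5(u_0)$ and perform the cancellation above, as the paper does.
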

\par
To show this proposition, we calculate the derivatives of $c$.
The following lemma holds.
\begin{lemma}\label{lem:gammamat}
The $n$-th derivative $c^{(n)}$ has the form
$$c^{(n)}=
\pmt{
C^n_{11}&C^n_{12}\\
C^n_{21}&C^n_{22}}
\pmt{\cos z/\lambda\\ \sin z/\lambda},
$$
and it holds that
$C^n_{11}=C^n_{22}$ and
$C^n_{12}=-C^n_{21}$.
\end{lemma}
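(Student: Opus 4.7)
The plan is to prove Lemma \ref{lem:gammamat} by induction on $n$. The base case $n=1$ is immediate from \eqref{eq:gammap}: the matrix appearing there is $\beta\pmt{-b & -xa/\lambda \\ xa/\lambda & -b}$, so $C^1_{11}=C^1_{22}=-\beta b$ and $C^1_{12}=-C^1_{21}=-\beta xa/\lambda$, confirming the required symmetry.

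For the inductive step, I would differentiate the matrix expression for $c^{(n)}$ using the product rule. The first contribution is simply the matrix of entrywise derivatives $\dot C^n_{ij}$ acting on $\pmt{\cos z/\lambda \\ \sin z/\lambda}$; this piece preserves the symmetry pattern because $\dot C^n_{11}=\dot C^n_{22}$ and $\dot C^n_{12}=-\dot C^n_{21}$ follow instantly from the induction hypothesis. For the second contribution, I would apply \eqref{eq:cossin} to rewrite $\frac{d}{du}\pmt{\cos z/\lambda\\\sin z/\lambda}$ as $(\beta a/\lambda)\,J\pmt{\cos z/\lambda\\\sin z/\lambda}$ with $J=\pmt{0&-1\\1&0}$, producing an extra matrix factor $(\beta a/\lambda)\pmt{C^n_{11}&C^n_{12}\\C^n_{21}&C^n_{22}}J$.

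The key observation that makes the induction close is that the class of matrices of the form $\pmt{A&B\\-B&A}$ — which is precisely the real representation of $\mathbb{C}$ — is preserved both by entrywise differentiation and by right-multiplication by $J$. The latter is a one-line check: $\pmt{A&B\\-B&A}J = \pmt{B&-A\\A&B}$, again of the same form (with new entries $A'=B$, $B'=-A$). Therefore the combined matrix produced by the product rule retains $C^{n+1}_{11}=C^{n+1}_{22}$ and $C^{n+1}_{12}=-C^{n+1}_{21}$, which closes the induction. I do not anticipate any substantive obstacle; the only care required is bookkeeping the signs when inserting $J$ on the right, and the whole proof reduces to this single algebraic fact about matrices commuting with the standard complex structure.
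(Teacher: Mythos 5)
Your proof is correct and is essentially the same induction as the paper's: the base case is \eqref{eq:gammap}, and the inductive step applies the product rule together with \eqref{eq:cossin}, the second contribution being exactly the matrix $(\beta a/\lambda)\,C^{n-1}J$ that the paper writes out entrywise as $\frac{\beta a}{\lambda}\pmt{C^{n-1}_{12}&-C^{n-1}_{11}\\ C^{n-1}_{22}&-C^{n-1}_{21}}$. The only cosmetic difference is that you package the sign bookkeeping as closure of the class of matrices $\pmt{A&B\\-B&A}$ under entrywise differentiation and right multiplication by $J$, which the paper checks by direct inspection.
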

\begin{proof}
We saw the case $n=1$ in \eqref{eq:gammap}.
We assume that the assertion holds for the $(n-1)$-st derivative.
By \eqref{eq:cossin}, we see
\begin{align*}
c^{(n)}&
=
\pmt{
(C^{n-1}_{11})'&(C^{n-1}_{12})'\\
(C^{n-1}_{21})'&(C^{n-1}_{22})'}
\pmt{\cos z/\lambda\\ \sin z/\lambda}
+
\dfrac{\beta a}{\lambda}
\pmt{
C^{n-1}_{12}&-C^{n-1}_{11}\\
C^{n-1}_{22}&-C^{n-1}_{21}}
\pmt{\cos z/\lambda\\ \sin z/\lambda}\\
&=
\pmt{
(C^{n-1}_{11})'+\dfrac{\beta a}{\lambda}
C^{n-1}_{12}&(C^{n-1}_{12})'-\dfrac{\beta a}{\lambda}
C^{n-1}_{11}\\[2mm]
(C^{n-1}_{21})'+\dfrac{\beta a}{\lambda}
C^{n-1}_{22}&(C^{n-1}_{22})'-\dfrac{\beta a}{\lambda}
C^{n-1}_{21}}
\pmt{\cos z/\lambda\\ \sin z/\lambda},
\end{align*}
where $(~)'=\dot{~}$.
By this formula and by the induction hypothesis, we see the assertion.
\end{proof}
We set $C_1^n=\trans{(C^n_{11},C^n_{21})}$, where $\trans(~)$ stands for
the matrix transposition.
By this lemma, it is enough to state $C_1^n$ for $c^{(n)}$.
We turn to calculate the derivatives of $c$.
Since 
$$
(C_1^1)'=
\dot\beta \pmt{-b\\ -xa/\lambda}+
\beta\pmt{-\ell a\\ -b(\beta a+\ell x)/\lambda}
$$
together with \eqref{eq:cossin}, we have
\begin{equation}\label{eq:c2}
C_1^2=
\pmt{
-b\dot\beta-\ell a\beta-\beta^2 a^2x/\lambda^2
\\
-(-xa\dot\beta+2ab\beta^2+\ell xb\beta)/\lambda}.
\end{equation}
By
\begin{align*}
(C^2_{11})'
&=
-2\ell a\dot\beta-b\ddot\beta-\dot\ell a\beta+\ell ^2b\beta
-(2\beta\dot\beta a^2x-2\beta^2 ab\ell x-\beta^3 a^2b)/\lambda^2,\\
(C^2_{21})'
&=
-(5ab\beta\dot\beta+2xb\ell \dot\beta-xa\ddot\beta
-3b^2\ell \beta^2+2a^2\ell \beta^2
+\dot\ell xb\beta+\ell xa\ell \beta)/\lambda,
\end{align*}
we see
\begin{equation}\label{eq:c3}
C_1^3=\pmt{
-2\ell a\dot\beta-b\ddot\beta-\dot\ell a\beta+\ell ^2b\beta
+\dfrac{1}{\lambda^2}\Big(-3\beta\dot\beta a^2x+3\beta^2 ab\ell x+3\beta^3 a^2b\Big)
\\[4mm]
\dfrac{1}{\lambda}\Big(-6ab\beta\dot\beta-2xb\ell \dot\beta+xa\ddot\beta
+3b^2\ell \beta^2-3a^2\ell \beta^2
-\dot\ell xb\beta-xa\ell ^2\beta\Big)
-\dfrac{\beta^3 a^3x}{\lambda^3}
}.
\end{equation}
Furthermore, by
\begin{align*}
(C^3_{11})'
&=
-3\dot\ell a\dot\beta+3\ell ^2b\dot\beta-3\ell a\ddot\beta
-b\dddot\beta-\ddot\ell a\beta+3\ell \dot\ell b\beta
+\ell ^3a\beta \\
&\hspace{10mm}+
\dfrac{1}{\lambda^2}
\Big(-3(\dot\beta)^2a^2x-3\beta\ddot\beta a^2x+12\beta\dot\beta a\ell bx
+12\beta^2\dot\beta a^2b-3\beta^2b^2\ell ^2x+3\beta^2a^2\ell ^2x\\
&\hspace{10mm}+3\beta^2ab\dot\ell x-
9\beta^3ab^2\ell +3\beta^3a^3\ell \Big),\\
(C^3_{21})'
&=
\dfrac{1}{\lambda}\Big(14\ell b^2\beta\dot\beta -12a^2\ell \beta\dot\beta 
-6ab(\dot\beta)^2-7ab\beta\ddot\beta-3\ell ^2ax\dot\beta-3xb\dot\ell \dot\beta
-3xb\ell \ddot\beta +xa\dddot\beta\\
&\hspace{10mm}+13ab\ell ^2\beta^2
+
4b^2\dot\ell \beta^2-3a^2\dot\ell \beta^2-\ddot\ell xb\beta
-3\dot\ell x\ell a\beta+x\ell ^3\beta b\Big)\\
&\hspace{10mm}
-\dfrac{1}{\lambda^3}\Big(3\beta^2\dot\beta a^3x-3\beta^3a^2x\ell b-\beta^4a^3b\Big),
\end{align*}
we have
\begin{equation}\label{eq:c4}
C_{1}^4=
\pmt{
\begin{array}{l}
-3\dot\ell a\dot\beta +3\ell ^2b\dot\beta
-3\ell a\ddot\beta -b\dddot\beta-\ddot\ell a\beta+3\ell \dot\ell b\beta \\
\hspace{10mm}+\dfrac{\ell ^3a\beta}{\lambda^2}\Big(-3(\dot\beta)^2a^2x
-4\beta\ddot\beta a^2x+14\beta\dot\beta a\ell bx
+18\beta^2\dot\beta a^2b-3\beta^2b^2\ell ^2x\\
\hspace{10mm}+4\beta^2a^2\ell ^2x+4\beta^2ab\dot\ell x
-12\beta^3ab^2\ell +6\beta^3a^3\ell \Big)
+\dfrac{1}{\lambda^4}\beta^4 a^4x\\[5mm]
\dfrac{1}{\lambda}\Big(14\ell b^2\beta\dot\beta-14a^2\ell \beta\dot\beta
-6ab(\dot\beta)^2
-8ab\beta\ddot\beta -3\ell ^2ax\dot\beta-3xb\dot\ell \dot\beta
-3xb\ell \ddot\beta\\
\hspace{10mm}+xa\dddot\beta
+14ab\ell ^2\beta^2+4b^2\dot\ell \beta^2-4a^2\dot\ell \beta^2-\ddot\ell xb\beta
-3\dot\ell x\ell a\beta+x\ell ^3\beta b\Big)\\
\hspace{10mm}
+\dfrac{1}{\lambda^3}\Big(-6\beta^2\dot\beta a^3x
+4\beta^4a^3b+6\beta^3a^2x\ell b\Big)
\end{array}
}.\end{equation}
Finally, differentiating $C_{1}^4$,
we have
\begin{equation}\label{eq:c5}
C_{1}^5=
\pmt{
\begin{array}{l}
-b \beta \ell ^4+4 a \ell ^3 \dot\beta+6 a \beta \ell ^2 \dot\ell 
+12 b \ell  \dot\beta \dot\ell +3 b \beta (\dot\ell)^2+6 b \ell ^2 \ddot\beta\\
\hspace{5mm}
-6 a \dot\ell \ddot\beta+4 b \beta \ell  \ddot\ell -4 a \dot\beta \ddot\ell
-4 a \ell  \dddot\beta-a \beta \dddot\ell -b \beta^{(4)}\\
\hspace{5mm}+
\dfrac{1}{\lambda^2}
\Big(-60 a^2 b \beta^3 \ell ^2+15 b^3 \beta^3 \ell ^2
+50 a^3 \beta^2 \ell  \dot\beta
-100 a b^2 \beta^2 \ell  \dot\beta+45 a^2 b \beta (\dot\beta)^2\\
\hspace{5mm}+10 a^3 \beta^3 \dot\ell 
-20 a b^2 \beta^3 \dot\ell +30 a^2 b \beta^2 \ddot\beta\Big)
-
\dfrac{5 a^4 b \beta^5}{\lambda^4}
+*x
\\[5mm]
\dfrac{1}{\lambda}
\Big(15 a^2 \beta^2 \ell ^3-15 b^2 \beta^2 \ell ^3+90 a b \beta \ell ^2 \dot\beta
-20 a^2 \ell  (\dot\beta)^2+20 b^2 \ell  (\dot\beta)^2+50 a b \beta^2 \ell  \dot\ell \\
\hspace{5mm}-25 a^2 \beta \dot\beta \dot\ell +25 b^2 \beta \dot\beta \dot\ell 
-25 a^2 \beta \ell  \ddot\beta
+25 b^2 \beta \ell  \ddot\beta-20 a b \dot\beta \ddot\beta-5 a^2 \beta^2 \ddot\ell \\
\hspace{5mm}+5 b^2 \beta^2 \ddot\ell-10 a b \beta \dddot\beta\Big)
+
\dfrac{1}{\lambda^3}
\Big(10 a^4 \beta^4 \ell -30 a^2 b^2 \beta^4 \ell +40 a^3 b \beta^3 \dot\beta\Big)
+*x
\end{array}
},
\end{equation}
where $*$ stands for a function.
Since we will see the derivatives up to $5$-th degree, 
the terms with $x$ in $C_{1}^5$ will not be necessary in the later calculations.
On the other hand, we note that
\begin{equation}\label{eq:detlemma}
\det
\left(
\pmt{
x_{11}&-x_{21}\\
x_{21}&x_{11}}
\pmt{v_1\\v_2},\ 
\pmt{y_{11}&-y_{21}\\
y_{21}&y_{11}}
\pmt{v_1\\v_2}
\right)
=
(v_1^2+v_2^2)
\det
\pmt{
 x_{11}&y_{11}\\
 x_{21}&y_{21}}.
\end{equation}
\begin{proof}[Proof of Proposition {\rm \ref{thm:singcond}}]
{\rm (I)}
We assume $\beta(u_0)=x(u_0)=0$ and $b(u_0)\ne0$. Then it holds that
$C_1^2=\trans{(-b\dot\beta,0)}$ and
$C_1^3=\trans{(-2\ell a\dot\beta-b\ddot\beta,0)}$ at $u_0$.
Thus $\det(\ddot c,\dddot c)(u_0)=0$.
On the other hand, $\ddot c(u_0)=0$ if and only if
$\dot \beta (u_0)=0$.
We assume $\dot \beta (u_0)=0$.
Then
$C_1^3=\trans{(-b\ddot \beta ,0)}$
and
$C_1^4=\trans{(-3\ell a\ddot \beta -b\dddot \beta ,0)}$ at $u_0$.
Thus $\det(\dddot c,c^{(4)})(u_0)=0$.
Next, we assume $\dot \beta (u_0)\ne0$.
Setting 
$$k=
\dfrac{2a \ell  \dot \beta +b \ddot \beta }{b \dot \beta },
$$
we have $\dddot c=k\ddot c$ at $u_0$.
Then by a direct calculation,
$\det(\ddot c,3c^{(5)}-10kc^{(4)})(u_0)=0$ if and only if
$
(a^2+b^2) \ell  (\dot \beta )^2=0.
$
Thus we see the assertion.

{\rm (II)}
We assume $\beta(u_0)\not=0$ and $(x(u_0),b(u_0))=(0,0)$. 
By \eqref{eq:c2} and \eqref{eq:c3},
we have
$$
C_1^2=\pmt{-\ell a\beta\\0},\quad
C_1^3=\pmt{-2\ell a\dot \beta -\dot\ell a\beta\\-3a^2\ell \beta^2/\lambda}.
$$
Thus $\ddot c\ne0$ if and only if $\ell (u_0)\ne0$.
Moreover, since
$$
\det(\ddot{c},\dddot{c})(u_0)=
\dfrac{3\ell^2(u_0)a^3(u_0)\beta^3(u_0)}{\lambda},
$$
$c$ is a $3/2$-cusp at $u_0$ if and only if $\ell (u_0)\ne0$.
This also implies 
$c$ does not have a $5/2$-cusp.
If $\ell (u_0)\ne0$, then by \eqref{eq:c3} and \eqref{eq:c4},
we have
$$
C_1^3
=\pmt{
-\dot\ell a\beta\\0},\quad
C_1^4
=
\pmt{
-3\dot\ell a\dot \beta -\ddot\ell a\beta\\
-4a^2\dot \ell  \beta^2/\lambda}.
$$
Thus $\dddot c(u_0)\ne0$ if and only if $\dot \ell  (u_0)\ne0$, and
we have
$\det(\dddot c(u_0),c^{(4)}(u_0))=(4(\dot \ell  )^3a^3\beta^3/\lambda)(u_0)$.
Thus $c$ is a $4/3$-cusp at $u_0$ if and only if $\ell =0$, $\dot \ell  \ne0$ at $u_0$.
This also implies 
$c$ never be a $5/3$-cusp.

{\rm (III)}
We assume $\beta(u_0)=0, (x(u_0),b(u_0))=(0,0)$.
Then we see
$\ddot c=0$ and
$\det(\dddot{c},c^{(4)})=0$ at $u_0$.
Thus
$c$ is never be a $3/2$-cusp, a $5/2$-cusp nor a $4/3$-cusp.
Moreover by
$$
\det(\dddot{c},c^{(5)})(u_0)=\dfrac{40\dot{\ell}^2(u_0)a^3(u_0)\dot{\beta}^3(u_0)}{\lambda},
$$
$c$ is a $5/3$-cusp at $u_0$
if and only if $\ell(u_0)\dot{\beta}(u_0)\ne0$.
\end{proof}

\section{Examples}

We give examples of helicoidal surfaces of frontals with singular points
dealt with the above.
We consider all of the singular points are $u=0$.

\begin{example}
Let $(\gamma,\nu): (\R,0) \to \R^2 \times S^1$ be 
\begin{align*}
\gamma(u)&=(x(u),z(u))=(u^2+u^3,u^2), \\
\nu(u) &=(a(u),b(u))=\left(\dfrac{2}{\sqrt{8+12u+9u^2}}, -\dfrac{2+3u}{\sqrt{8+12u+9u^2}}\right) 
\end{align*} 
and $\lambda=1/2$.
Then $(\gamma,\nu)$ is a Legendre curve with curvature
$$\ell(u)=-\dfrac{6}{8+12u+9u^2},\quad \beta(u)=u\sqrt{8+12u+9u^2}.$$
If we take $(k_1,k_2): (\R,0) \to S^1$,
\begin{align*}
k_1(u)&=\frac{2+3u}{\sqrt{(2+3u)^2+4(u^2+u^3)^2(8+12u+9u^2)}}, \\
k_2(u)&=-\frac{2(u^2+u^3)\sqrt{8+12u+9u^2}}{\sqrt{(2+3u)^2+4(u^2+u^3)^2(8+12u+9u^2)}},
\end{align*}
then the helicoidal surface $(r,\bn,\bs)$ is a framed surface by Proposition \ref{basic-invariants-framed-surface}.
By a direct calculation, we have 
$\beta(0)=x(0)=0$, $b(0)=-1/\sqrt{2}\ne0$, 
$\ell(0)\dot{\beta}(0)=- 3/\sqrt{2}\ne0$.
By Theorem \ref{thm:criteria}, it holds that
$r$ at $(0,0)$ is a $5/2$-cuspidal edge. 
See Figure \ref{fig:5-2cusp}, where we show
the curve $\gamma$ with a thick line with the $x$ and $z$-axes with thin lines,
the helicoidal surface $r$ and
a closer view of singular points of $r$.
\begin{figure}[htbp]
\centering
\includegraphics[width=0.25\linewidth]{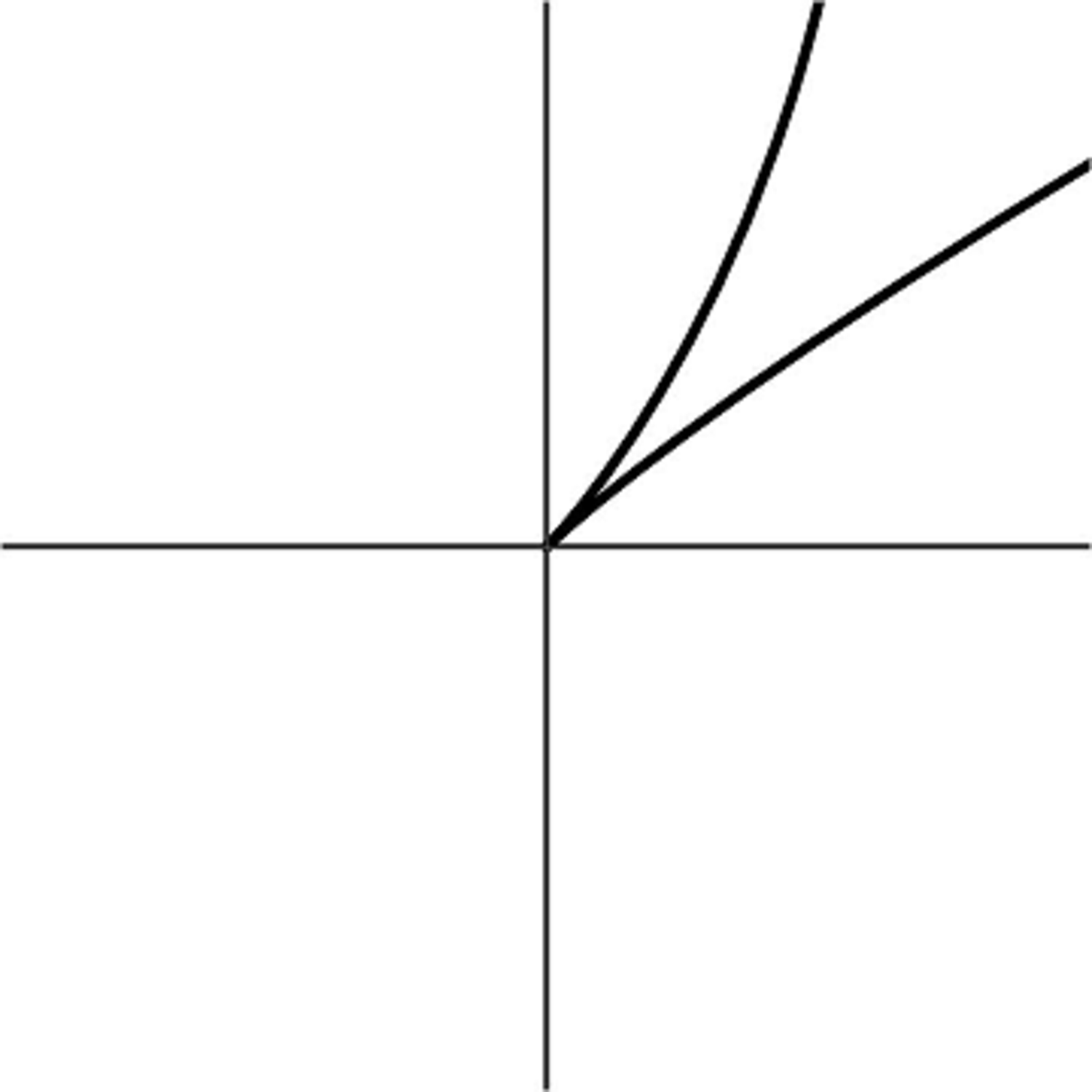}\hspace{10mm}
\includegraphics[width=0.25\linewidth]{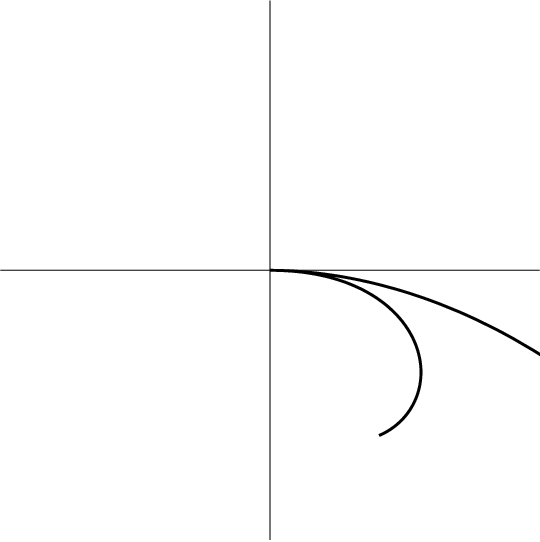}\hspace{10mm}\\
\includegraphics[width=0.1\linewidth]{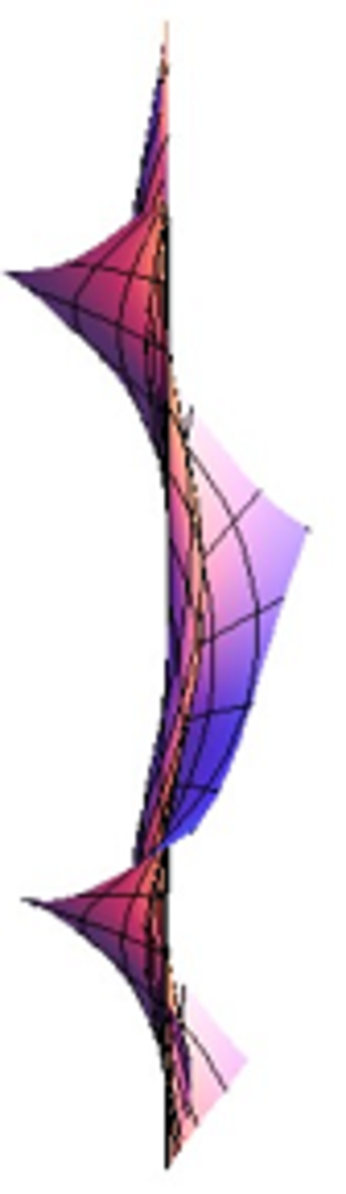}\hspace{20mm}
\includegraphics[width=0.1\linewidth]{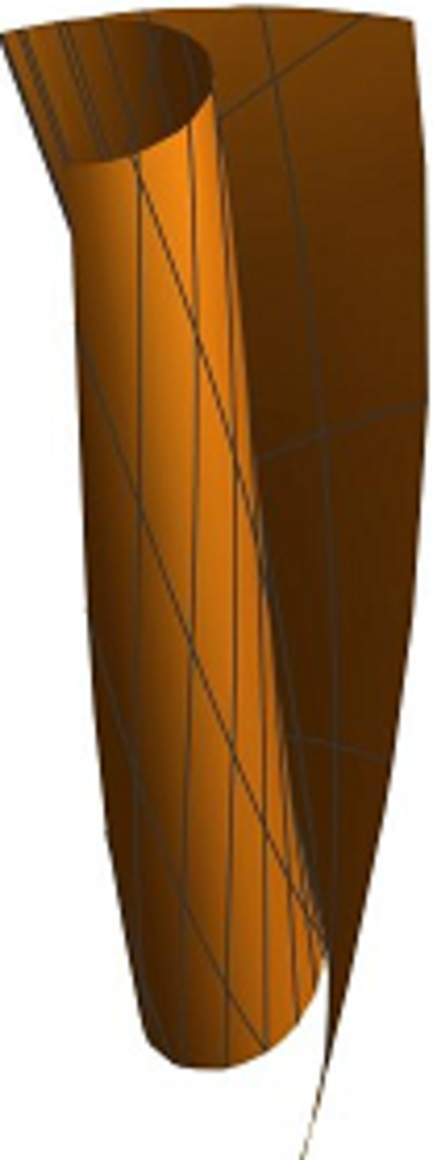}
\label{fig:5-2cusp}
\caption{Helicoidal surface with $5/2$-cuspidal edge 
(the curve $\gamma$, the slice curve $s[r[\gamma]]$, the surface $r$, closer view)}
\end{figure}
\end{example}

\begin{example}
Let $(\gamma,\nu): (\R,0) \to \R^2 \times S^1$ be 
\begin{align*}
\gamma(u)&=(x(u),z(u))=(u^2,u), \\
\nu(u) &=(a(u),b(u))=\left(\dfrac{1}{\sqrt{1+4u^2}}, -\dfrac{2u}{\sqrt{1+4u^2}}\right) 
\end{align*} 
and $\lambda=1/2$.
Then $(\gamma,\nu)$ is a Legendre curve with curvature
$$\ell(u)=-\dfrac{2}{{1+4u^2}},\quad  \beta(u)=\sqrt{1+4u^2}.$$
If we take $(k_1,k_2): (\R,0) \to S^1$,
\begin{align*}
k_1(u)=\frac{1}{\sqrt{1+u^2(1+4u^2)}}, \ 
k_2(u)=-\frac{u\sqrt{1+4u^2}}{1+u^2(1+4u^2)},
\end{align*}
then the helicoidal surface $(r,\bn,\bs)$ is a framed surface by Proposition \ref{basic-invariants-framed-surface}.
By a direct calculation, we have 
$\beta(0)=1\ne0$, $(x(0),b(0))=(0,0)$, $\ell(0)=-2\ne0$.
By Theorem \ref{thm:criteria}, it holds that
 $r$ at $(0,0)$ is a $3/2$-cuspidal edge. See Figure \ref{fig:3-2cusp}.
\begin{figure}[htbp]
\centering
\includegraphics[width=0.25\linewidth]{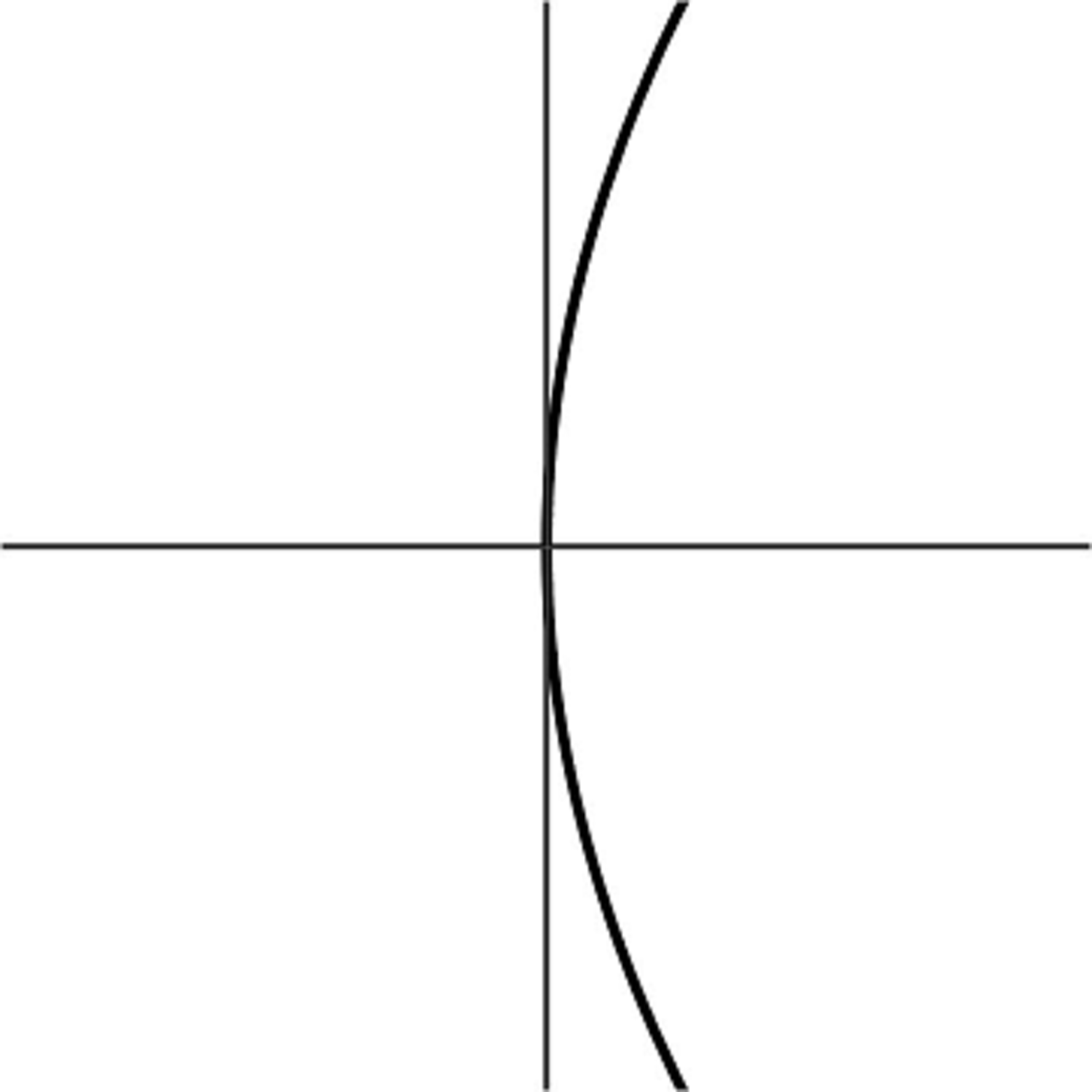}\hspace{10mm}
\includegraphics[width=0.25\linewidth]{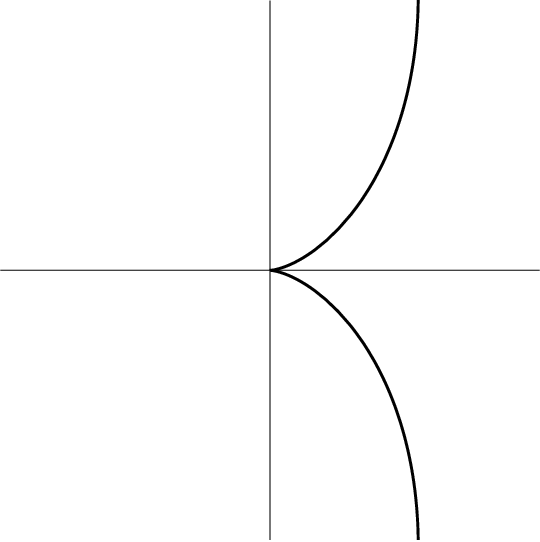}\hspace{10mm}\\
\includegraphics[width=0.16\linewidth]{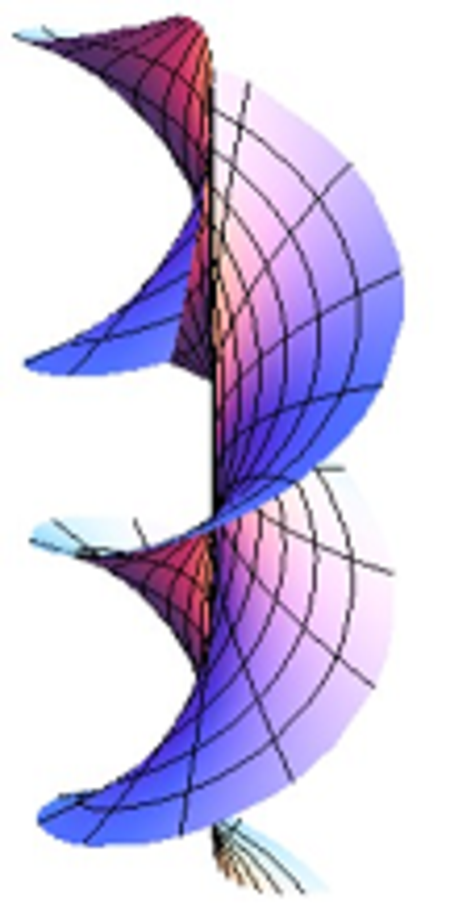}\hspace{20mm}
\includegraphics[width=0.16\linewidth]{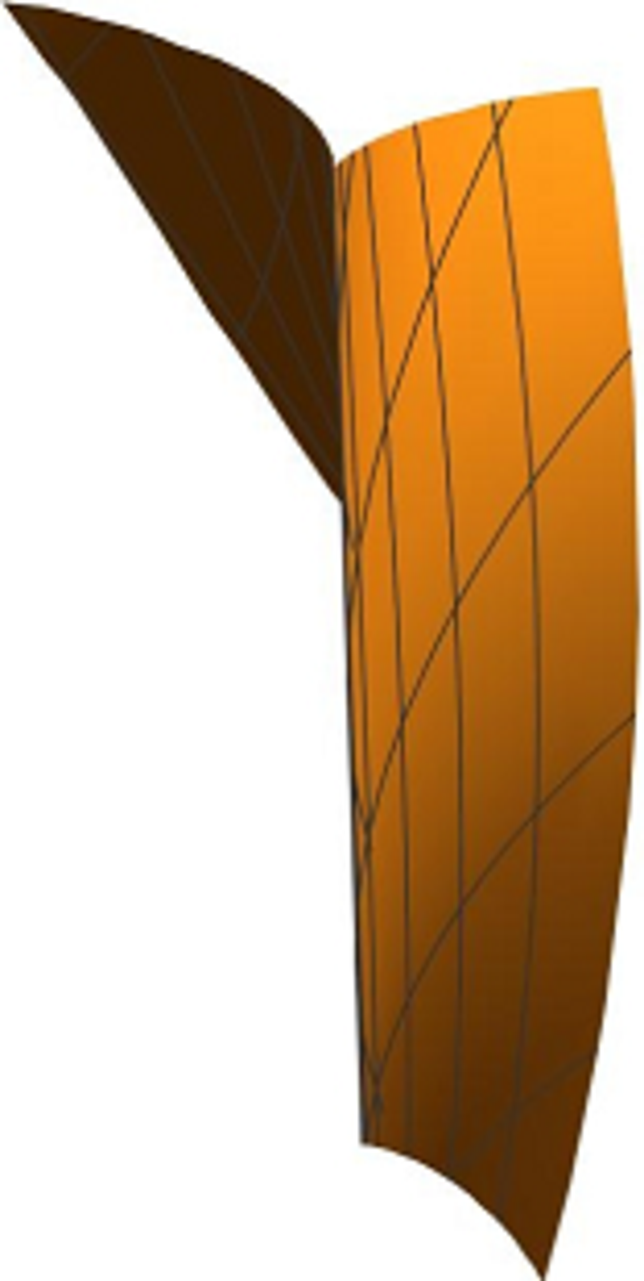}
\label{fig:3-2cusp}
\caption{Helicoidal surface with $3/2$-cuspidal edge 
(the curve $\gamma$, the slice curve $s[r[\gamma]]$, the surface $r$, closer view)}
\end{figure}
\end{example}

\begin{example}
Let $(\gamma,\nu): (\R,0) \to \R^2 \times S^1$ be 
\begin{align*}
\gamma(u)&=(x(u),z(u))=(u^3,u), \\
\nu(u) &=(a(u),b(u))=\left(\dfrac{1}{\sqrt{1+9u^4}}, -\dfrac{3u^2}{\sqrt{1+9u^4}}\right) 
\end{align*} 
and $\lambda=1/2$.
Then $(\gamma,\nu)$ is a Legendre curve with curvature
$$\ell(u)=-\dfrac{6u}{{1+9u^4}},\quad  \beta(u)=\sqrt{1+9u^4}.$$
If we take $(k_1,k_2): (\R,0) \to S^1$,
\begin{align*}
k_1(u)=\frac{3}{\sqrt{9+4u^2(1+9u^4)}}, \ 
k_2(u)=-\frac{2u\sqrt{1+9u^4}}{9+4u^2(1+9u^4)},
\end{align*}
then the helicoidal surface $(r,\bn,\bs)$ is a framed surface by Proposition \ref{basic-invariants-framed-surface}.
By a direct calculation, we have 
$\beta(0)=1\ne0$, $(x(0),b(0))=(0,0)$, $\ell(0)=0$, $\dot{\ell}(0)=-6\ne0$.
By Theorem \ref{thm:criteria}, it holds that 
$r$ at $(0,0)$ is a $4/3$-cuspidal edge. See Figure \ref{fig:4-3cusp}.
\begin{figure}[htbp]
\centering
\includegraphics[width=0.25\linewidth]{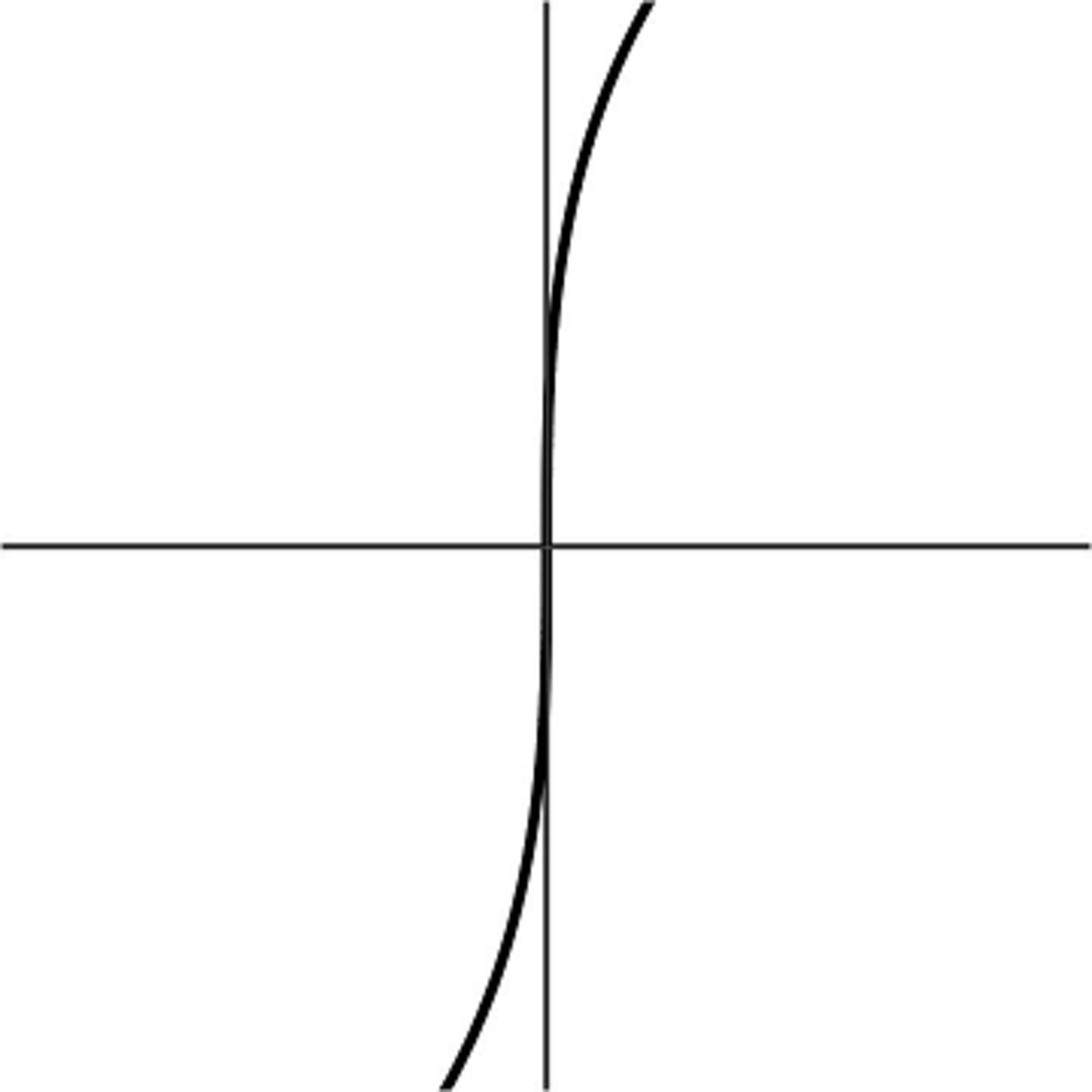}\hspace{10mm}
\includegraphics[width=0.25\linewidth]{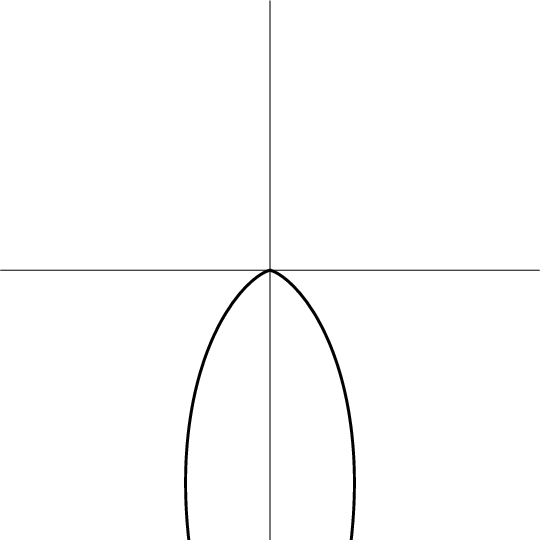}\hspace{10mm}\\
\includegraphics[width=0.1\linewidth]{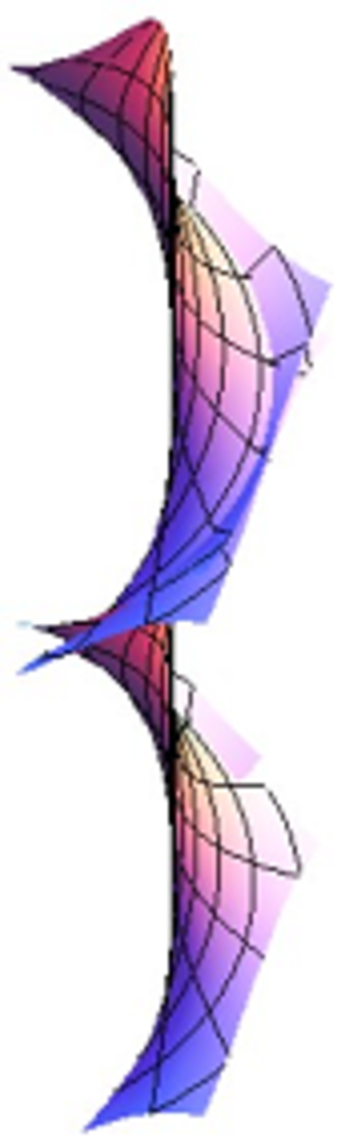}\hspace{20mm}
\includegraphics[width=0.17\linewidth]{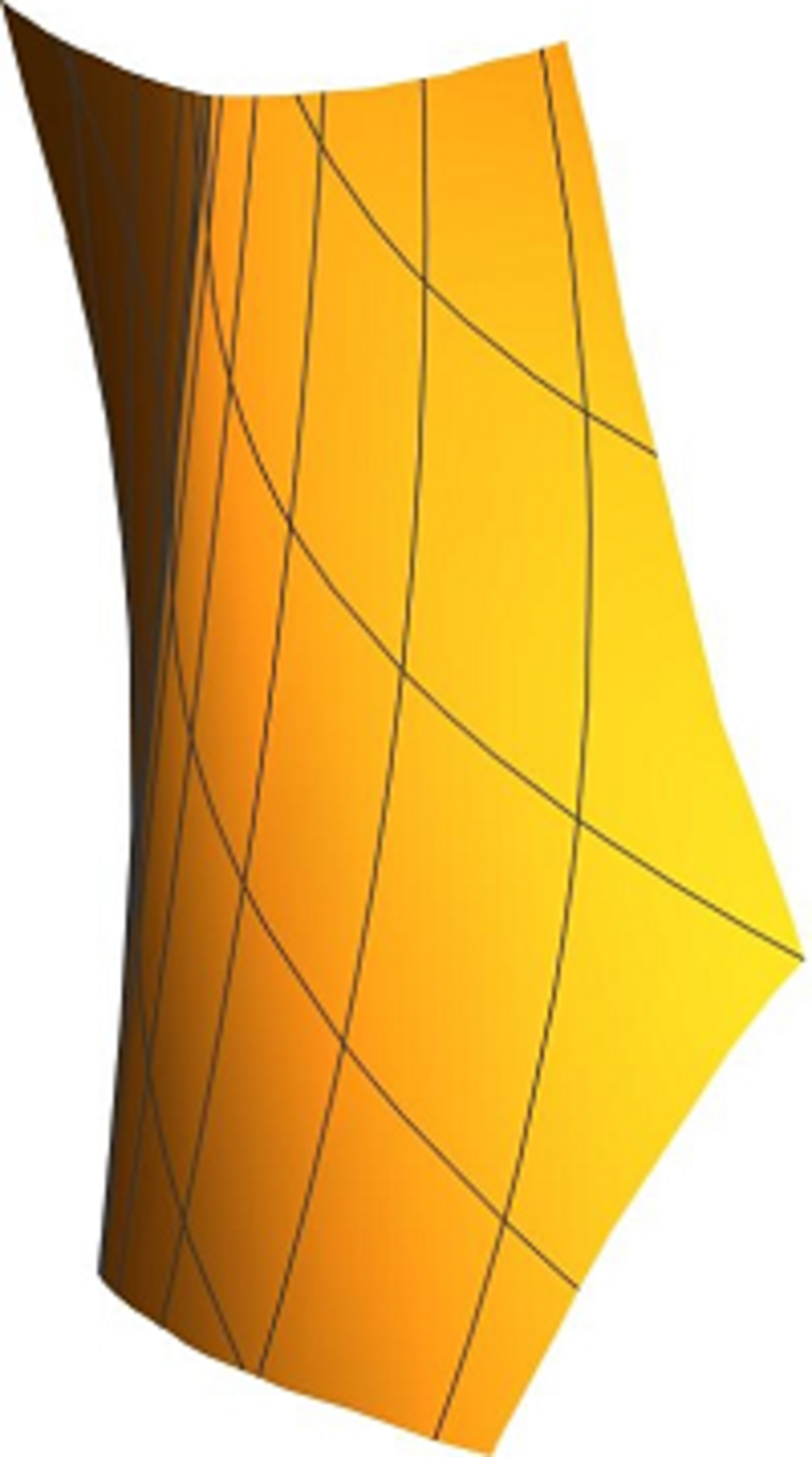}
\label{fig:4-3cusp}
\caption{Helicoidal surface with $4/3$-cuspidal edge 
(the curve $\gamma$, the slice curve $s[r[\gamma]]$, the surface $r$, closer view)}
\end{figure}
\end{example}

\begin{example}
Let $(\gamma,\nu): (\R,0) \to \R^2 \times S^1$ be 
\begin{align*}
\gamma(u)&=(x(u),z(u))=(u^3,u^2), \\
\nu(u) &=(a(u),b(u))=\left(\dfrac{2}{\sqrt{4+9u^2}}, -\dfrac{3u}{\sqrt{4+9u^2}}\right) 
\end{align*} 
and $\lambda=1/2$.
Then $(\gamma,\nu)$ is a Legendre curve with curvature
$$\ell(u)=-\dfrac{6}{{4+9u^2}},\quad  \beta(u)=u \sqrt{4+9u^2}.$$
If we take $(k_1,k_2): (\R,0) \to S^1$,
\begin{align*}
k_1(u)=\frac{3}{\sqrt{9+4u^4(4+9u^2)}}, \ 
k_2(u)=-\frac{2u^2\sqrt{4+9u^2}}{9+4u^4(4+9u^2)},
\end{align*}
then the helicoidal surface $(r,\bn,\bs)$ is a framed surface by Proposition \ref{basic-invariants-framed-surface}.
By a direct calculation, we have 
$\beta(0)=0$, $(x(0),b(0))=(0,0)$, $\ell(0)\dot{\beta}(0)=-3\ne0$.
By Theorem \ref{thm:criteria}, it holds that
 $r$ at $(0,0)$ is a $5/3$-cuspidal edge. See Figure \ref{fig:5-3cusp}.
\begin{figure}[htbp]
\centering
\includegraphics[width=0.25\linewidth]{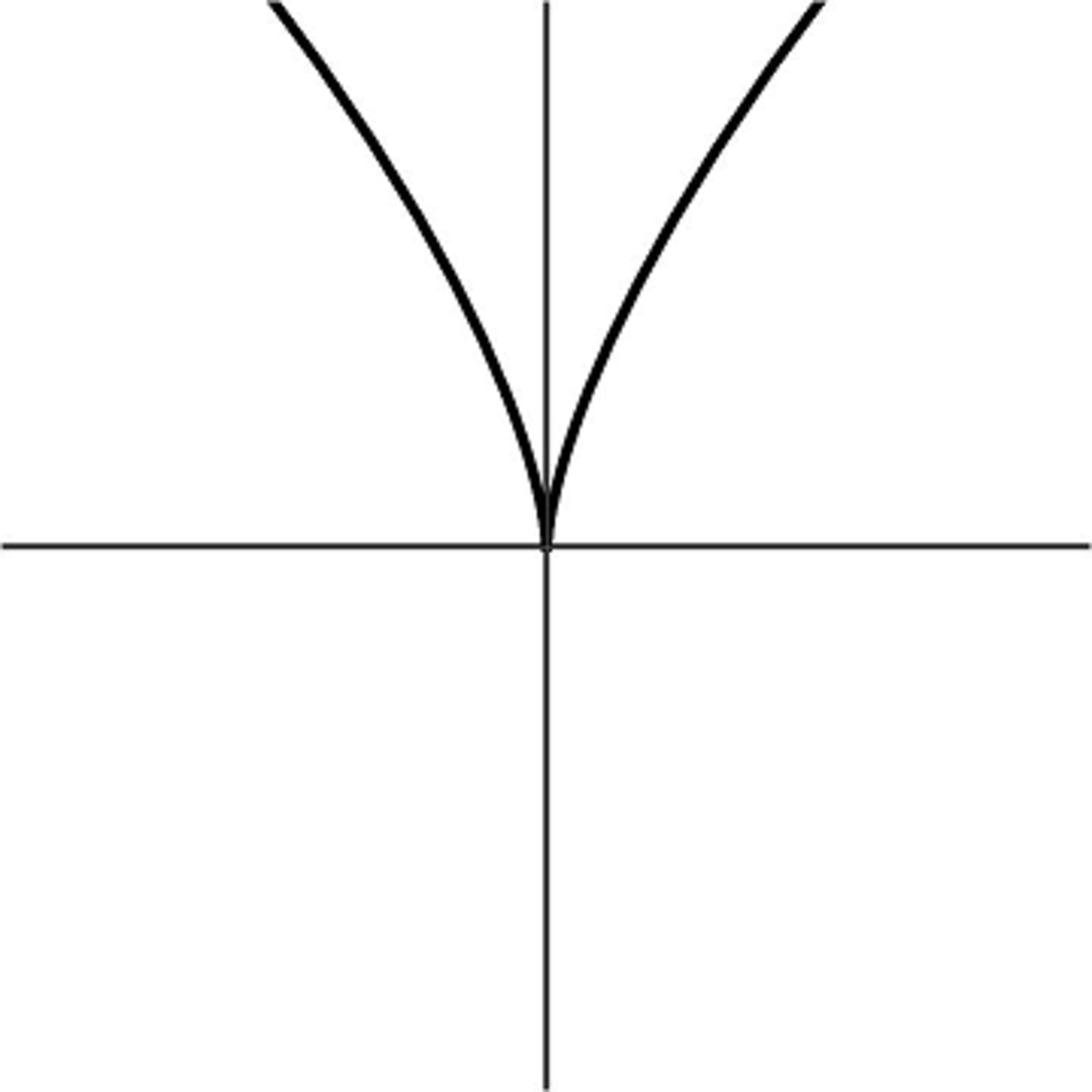}\hspace{10mm}
\includegraphics[width=0.25\linewidth]{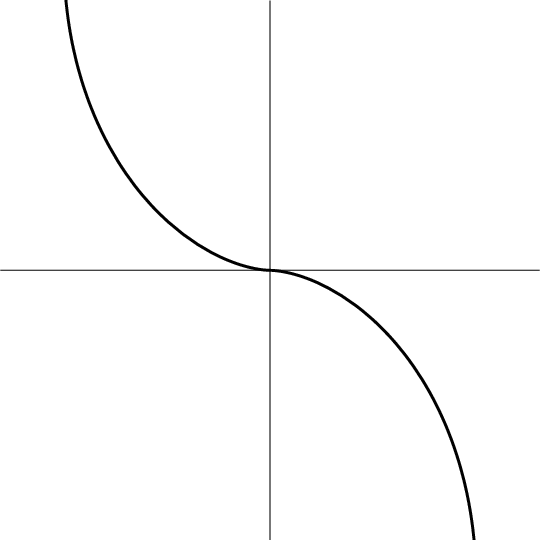}\hspace{10mm}\\
\includegraphics[width=0.11\linewidth]{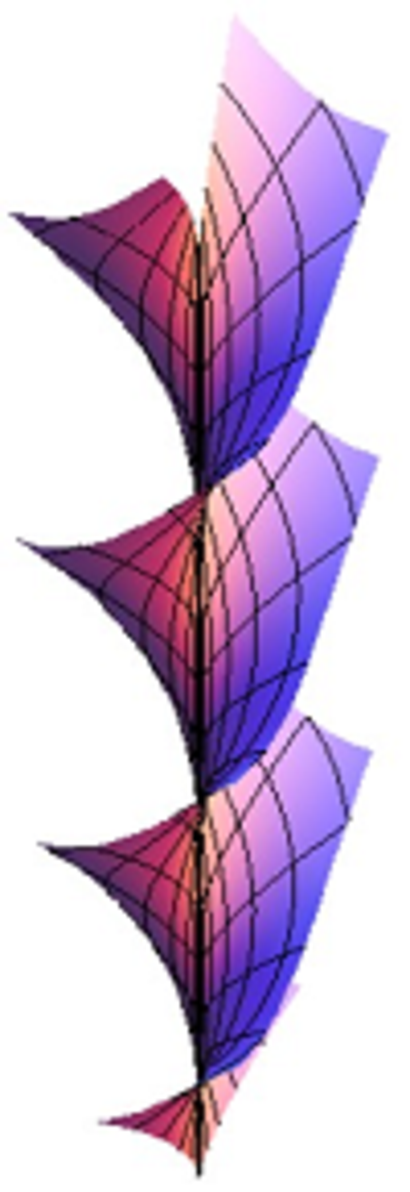}\hspace{20mm}
\includegraphics[width=0.17\linewidth]{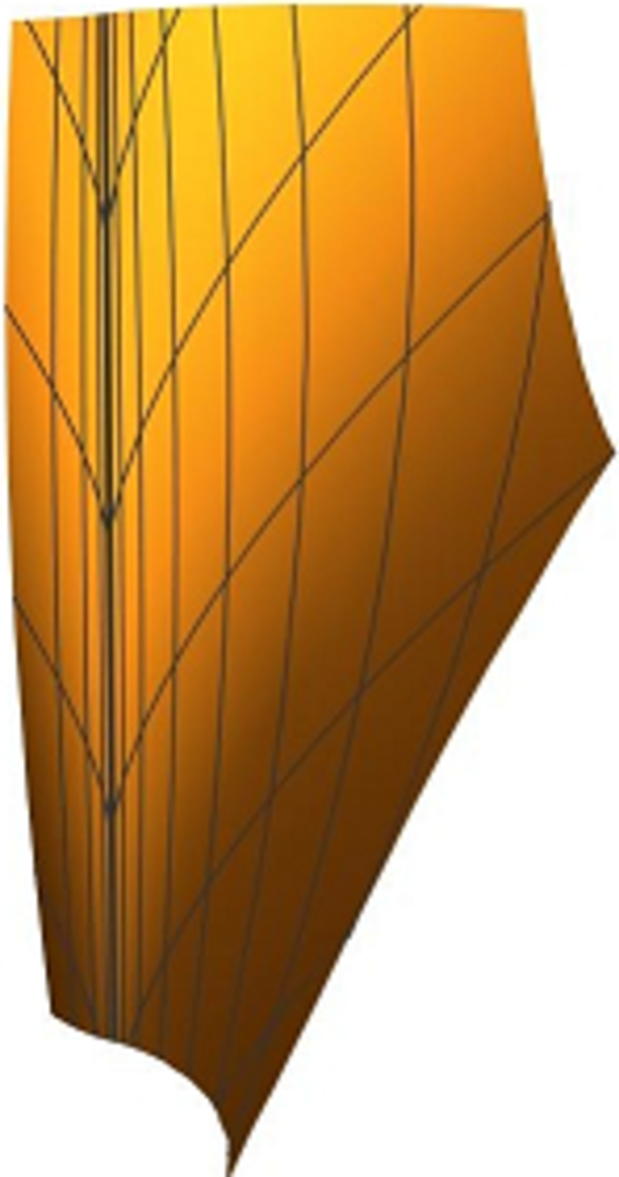}
\label{fig:5-3cusp}
\caption{Helicoidal surface with $5/3$-cuspidal edge 
(the curve $\gamma$, the slice curve $s[r[\gamma]]$, the surface $r$, closer view)}
\end{figure}
\end{example}


\ \\
Nozomi Nakatsuyama,
\\
Muroran Institute of Technology, Muroran 050-8585, Japan,
\\
E-mail address: \href{mailto:23043042@muroran-it.ac.jp}{23043042@muroran-it.ac.jp}
\\
\\
Kentaro Saji,
\\
Faculity of Science, Kobe University, Rokko 1-1, Kobe 657-8501, Japan,
\\
E-mail address: \href{mailto:saji@math.kobe-u.ac.jp}{saji@math.kobe-u.ac.jp}
\\
\\
Runa Shimada,
\\
Faculity of Science, Kobe University, Rokko 1-1, Kobe 657-8501, Japan,
\\
E-mail address: \href{mailto:231s010s@stu.kobe-u.ac.jp}{231s010s@stu.kobe-u.ac.jp}
\\
\\
Masatomo Takahashi, 
\\
Muroran Institute of Technology, Muroran 050-8585, Japan,
\\
E-mail address: \href{mailto:masatomo@muroran-it.ac.jp}{masatomo@muroran-it.ac.jp}

\end{document}